\newtheorem{Th}{Theorem}
\newtheorem{lemma}[Th]{Lemma}
\newtheorem{cor}[Th]{Corollary}
\newtheorem{prop}[Th]{Proposition}
\newtheorem{remark}[Th]{Remark}
\theoremstyle{nonumberplain}
\newtheorem{proof}{Proof}
\newcommand{\tauh}{\ensuremath{\mathcal{T}_H}}%triangulation hamiltonnienne
\newcommand{\tauhtron}{\ensuremath{\overline{\mathcal{T}_H}}}%triangulation hamiltonnienne tronquee
\newcommand{\taui}{\ensuremath{\mathcal{T}_I}}%triangulation idale
\newcommand{\Ri}{\ensuremath{R_I}}
\newcommand{\Rhnoncom}{\ensuremath{\tilde{R}_H}}
\newcommand{\Rh}{\ensuremath{R_H}}
\newcommand{\projh}{\ensuremath{p_H}}%projection de M sur les simplexes pour \tauH
\newcommand{\proji}{\ensuremath{p_I}}%projection de M sur les simplexes pour \tauH
\newcommand{\trigi}{\ensuremath{\mathbf{\Delta^{3,I}}}}
\newcommand{\trigh}{\ensuremath{\mathbf{\Delta^{3,H}}}}
\newcommand{\recoli}{\ensuremath{\mathbf{\Phi^I}}}
\newcommand{\recolh}{\ensuremath{\mathbf{\Phi^H}}}
\newcommand{\vari}{\ensuremath{(\mathbf{\Delta^{3,I}}\setminus\mathbf{\Delta^{0,I}})/\mathbf{\Phi^I}}}
\newcommand{\nchain}[1]{\ensuremath{\Delta_{#1}^H}}
\newcommand{\bound}[1]{\ensuremath{\partial_{#1}^H}}
\newcommand{\trigdim}[1]{\ensuremath{\mathbf{\Delta^{#1}}}}
\newcommand{\trigdimH}[1]{\ensuremath{\mathbf{\Delta^{#1,H}}}}
\newcommand{\trigdimhtron}[1]{\ensuremath{\overline{\mathbf{\Delta^{#1,H}}}}}
\newcommand{\distE}{\ensuremath{K}}
\newcommand{\distF}{\ensuremath{F^K}}
\newcommand{\distT}{\ensuremath{T^K}}
\newcommand{\anyEH}{\ensuremath{A}}
\newcommand{\settaut}[1]{\ensuremath{\mathcal{F}}}
\newcommand{\hex}{\ensuremath{\mathcal{H}}}
\newcommand{\tri}{\ensuremath{\mathcal{T}}}
\newcommand{\signe}[1]{\ensuremath{\eta_{#1}}}
\newcommand{\signeb}[2]{\ensuremath{\eta_{#1}^{#2}}}
\newcommand{\Address}{{% additional braces for segregating \footnotesize
  \bigskip
  \footnotesize
\textsc{Section de Mathématiques, Université de Genève, 2-4 rue du Lièvre, Case postale 64, 1211 Genève 4, Suisse.}\par\nopagebreak
  \textit{E-mail address}: \texttt{xavier.morvan@unige.ch}
}}
\title{A non-commutative generalisation of Thurston's gluing equations}
\author{Xavier Morvan\footnote{The work is supported by Swiss National Science Foundation, subsides SNF 200020\_149226.}}
\date{}
\begin{document}

\maketitle

\begin{abstract}
In his famous Princeton Notes, Thurston introduced the so-called gluing equations defining the deformation variety. Later,  Kashaev defined  a non-commutative ring from H-triangulations of 3-manifolds and observed that for trefoil and figure-eight knot complements the abelianization of this ring is isomorphic to the ring of regular functions on the deformation variety, \cite{Kashaev-definition_delta_groupoid, def_anneau, Kashaev-Delta-groupoids_and_ideal_triangulations}. In this paper, we prove that this is true for any knot complement in a homology sphere. We also analyse  some examples on other manifolds.
\end{abstract}

\section{Introduction}

In his famous Princeton notes \cite{Princeton_notes}, Thurston introduced the following gluing equations. 
Starting from an ideal triangulation $X$ of a cusped manifold, $M\setminus K$ assign to each edge of each tetraedron a shape parameter $z\in\mathbb{C}$ so that:
\begin{itemize}
	\item if $z_1, z_2, z_3$ are three shape parameters counter-clockwisely ordered around a vertex of a tetrahedron, then $z_1z_2z_3=-1$ and $z_2-z_1z_2=1$;
	 \item for each edge $E$ of $X$, let $z_1,\dots, z_n$ be the shape parameters of all the edges which are identified to $E$, then $\prod_{i=1}^n z_i=1$.
\end{itemize}

This defines a set of polynomial equations, hence an affine variety in $(\mathbb{C}\setminus\{0,1\})^{6n}$, where $n$ is the number of tetrahedra in the triangulation. This affine variety is called \emph{deformation variety}. We denote by \Ri\ its associated  ring of regular functions. The deformation variety may be empty for some triangulations. In that case, \Ri\ is defined to be the ring with one element.

Since then, the gluing equations have been extensively studied, see for example \cite{luo2012solving, solutions_for_gluing_eq, NEUMANN1985307, Petronio, Segerman_Generalisation, MR2866925}. 

Later, Kashaev introduced $\Delta$-groupoids \cite{Kashaev-definition_delta_groupoid} and $B'$-rings associated to ideal triangulations of manifolds and computed them for the trefoil and the figure eight knot complements \cite{Kashaev-Delta-groupoids_and_ideal_triangulations}.

Then, in \cite{def_anneau}, Kashaev introduced the ring \Rhnoncom, the abelianisation of which is studied in this article.
This ring is associated to a particular 1-vertex H-triangulation of a pair ($M^3,K$) (where $M^3$ is a connected, oriented, closed 3-manifold and $K$ is a knot) to which naturally corresponds  an ideal triangulation of the complement of $K$ in $M^3$. This will be precised in section \ref{Preliminaries}. In the case of the trefoil knot, \Rhnoncom\ is abelian and isomorphic to \Ri. In the case of the figure-eight knot, \Rhnoncom\ is not abelian, but its abelianization is isomorphic to \Ri.

We prove in the following that for any knot embedded in a homology 3-sphere there exists an isomorphism between the abelianization of the ring \Rhnoncom\ defined from particular H-triangulations and the ring of regular functions on the deformation variety of a corresponding ideal triangulation.
This isomorphism is explicitly constructed in section \ref{morphism} and the theorem proved in section \ref{proof}. In section \ref{examples} we give an example of a knot embedded in the 3-sphere and examples of knots embedded in other manifolds.

The author is sincerely grateful to Rinat Kashaev for very valuable discussions and usuful comments on this paper.

\section{Preliminaries}\label{Preliminaries}

\subsection{Triangulations}

Let $M$ be a connected, oriented, closed 3-manifold. A triangulation $\mathbf{\Delta}$ of $M$ is defined to consist of a pairwise disjoint union of oriented euclidian tetrahedra $\mathbf{\Delta^3}=\bigsqcup_{i=1}^n\Delta^{3}_i$, together with a collection $\mathbf{\Phi}$ of orientation-reversing affine isomorphisms pairing the faces of the tetrahedra in $\mathbf{\Delta^3}$, so that $M$ is homeomorphic to the identified space $\mathbf{\Delta^3} / \mathbf{\Phi}$. In the following, for $k\in\mathbb{N}$, $\trigdim{k}$ will denote the set of cells of dimension $k$ of the triangulation $\mathbf{\Delta}$.

An H-triangulation is a pair $(\mathbf{\Delta},K)$, where $\mathbf{\Delta}$ is a triangulation of a 3-manifold $M$ and $K$ is a hamiltonian subcomplex of the 1-skeleton of $\mathbf{\Delta}$. In the following, we will consider only \emph{particular} H-triangulations: we will assume that $\mathbf{\Delta}$ has only one vertex, and the hamiltonian subcomplex is given by a single edge \distE, which is contained in a single face, \distF\ of $\mathbf{\Delta}$, obtained from a single tetrahedron \distT\ of $\mathbf{\Delta}$, glued to itself as a closed book by $\mathbf{\Phi}$ along its two faces.
The edge \distE\ represents a knot embedded in $M$. Such a H-triangulation of $(M,\distE)$ will be denoted \tauh.  In order to stress the "H-nature" of \tauh, the set of $k$ dimensional cells of \tauh\ will be denoted \trigdimH{k}\ and the set of face pairings, \recolh. We will denote $\projh:\trigh\ \rightarrow\ \trigh/ \recolh$ the identification projection. \distE, \distF, \distT\ will respectively be called \emph{distinguished edge}, \emph{distinguished face} and \emph{distinguished tetrahedron} (see figure \ref{example of distinguished tetrahedron}).

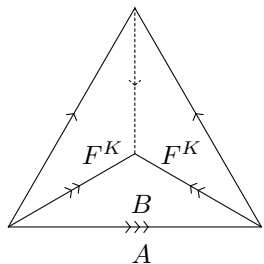
\begin{figure}
\begin{center}
\begin{tikzpicture}[line cap=round,line join=round,>=triangle 45,x=0.3699783241293612cm,y=0.37354724687015906cm]
\clip(-1.22,-7.66) rectangle (8.24,1.71);
\draw [dash pattern=on 1pt off 1pt] (3.5,1.58)-- (3.5,-3.61);
\draw [dash pattern=on 1pt off 1pt] (3.5,-1.17) -- (3.3,-1.01);
\draw [dash pattern=on 1pt off 1pt] (3.5,-1.17) -- (3.7,-1.01);
\draw (8,-6.21)-- (3.5,-3.61);
\draw (5.48,-4.75) -- (5.71,-4.66);
\draw (5.48,-4.75) -- (5.51,-5.01);
\draw (5.75,-4.91) -- (5.99,-4.81);
\draw (5.75,-4.91) -- (5.79,-5.17);
\draw (-1,-6.21)-- (3.5,-3.61);
\draw (1.52,-4.75) -- (1.49,-5.01);
\draw (1.52,-4.75) -- (1.29,-4.66);
\draw (1.25,-4.91) -- (1.21,-5.17);
\draw (1.25,-4.91) -- (1.01,-4.81);
\draw (-1,-6.21)-- (8,-6.21);
\draw (3.66,-6.21) -- (3.5,-6.41);
\draw (3.66,-6.21) -- (3.5,-6.01);
\draw (3.34,-6.21) -- (3.18,-6.41);
\draw (3.34,-6.21) -- (3.18,-6.01);
\draw (3.97,-6.21) -- (3.82,-6.41);
\draw (3.97,-6.21) -- (3.82,-6.01);
\draw (3.05,-6.51) node[anchor=north west] {$A$};
\draw (3.02,-4.73) node[anchor=north west] {$B$};
\draw (4.08,-2.84) node[anchor=north west] {$\distF$};
\draw (1.28,-2.84) node[anchor=north west] {$\distF$};
\draw (-1,-6.21)-- (3.5,1.56);
\draw (1.33,-2.19) -- (1.43,-2.43);
\draw (1.33,-2.19) -- (1.07,-2.22);
\draw (8,-6.21)-- (3.5,1.56);
\draw (5.67,-2.19) -- (5.93,-2.22);
\draw (5.67,-2.19) -- (5.57,-2.43);
\end{tikzpicture}
\caption{Example of a distinguished tetrahedron \distT. Its distinguished edge \distE\ is dotted. \label{example of distinguished tetrahedron}}
\end{center}
\end{figure}

\begin{prop}
	For any knot $K$ in $\mathbb{S}^3$, there exists a particular H-triangulation of $(\mathbb{S}^3,K)$.
\end{prop}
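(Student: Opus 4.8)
The plan is to pass from an arbitrary combinatorial model of the pair $(\mathbb{S}^3,K)$ to the required normal form by a finite sequence of local moves; the only genuinely topological ingredient is the first step. Since $K$ is isotopic to a piecewise-linear simple closed curve, it is classical that $\mathbb{S}^3$ admits a triangulation $\mathbf{\Delta}_0$ in which $K$ is a subcomplex of the $1$-skeleton, i.e. a union of edges $e_1,\dots,e_k$. I would then normalise $\mathbf{\Delta}_0$ while preserving the property that $K$ is a subcomplex: using Pachner (bistellar) moves supported in the complement of $K$, together with the relative moves allowed to slide and to amalgamate edges of $K$, one first removes every vertex not lying on $K$ (reducing each such vertex to a $4$--$1$ configuration and then eliminating it) and then repeatedly contracts a two-edge subpath $e_i\cup e_{i+1}$ of $K$ to a single edge (after suitable preliminary subdivisions and Pachner moves the relevant link condition holds, so the contraction is legitimate and carries $K$ along with it). The result is a one-vertex triangulation $\mathbf{\Delta}_1$ of $\mathbb{S}^3$ in which $K$ is a single loop-edge at the unique vertex; since there is only one vertex, such an edge is automatically a hamiltonian subcomplex, so $(\mathbf{\Delta}_1,K)$ is already an H-triangulation in the general sense.

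It remains to upgrade $\mathbf{\Delta}_1$ to a \emph{particular} H-triangulation. For this I would perform one further local modification, the ``book move'': choose a $2$-face $F_0$ of $\mathbf{\Delta}_1$ containing $K$, introduce a brand-new tetrahedron $\distT$ two of whose faces (those sharing a fixed common edge) are identified with one another by the face pairing fixing that edge -- a closed book with spine the shared edge -- declare this spine to be the new representative of $K$, and glue the two remaining free faces of $\distT$ into the old star of $F_0$. One checks that this is a composition of Pachner moves, so the underlying manifold is still $\mathbb{S}^3$ and the number of vertices stays equal to one; after the move $K$ lies in exactly one face $\distF$, which is a face of exactly one tetrahedron $\distT$ glued to itself as a closed book -- precisely the configuration of Figure~\ref{example of distinguished tetrahedron}. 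This yields a particular H-triangulation $\tauh$ of $(\mathbb{S}^3,K)$.

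The main obstacle is the normalisation step: one must verify that the two requirements ``single vertex'' and ``$K$ a single edge'' can be achieved simultaneously by legitimate moves -- in particular, that whenever a contraction would violate a link condition it can be restored by a preliminary Pachner move or subdivision -- and keep the bookkeeping of the relative moves along $K$ consistent throughout; the book move itself, although easy to picture, still requires checking that the local re-gluing is genuinely realised by Pachner moves and does not create a second vertex. (An alternative is to build an explicit ideal triangulation of $\mathbb{S}^3\setminus K$ from a diagram of $K$ and to reverse the passage from H-triangulations to ideal triangulations, but the same normalisation issues reappear there.)
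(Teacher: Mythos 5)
Your overall two-stage strategy (first produce a one-vertex H-triangulation of $(\mathbb{S}^3,K)$, then modify it locally so that $K$ sits inside a book-closed distinguished tetrahedron) is the same shape as the paper's argument, which simply invokes an explicit algorithm for one-vertex H-triangulations (Kashaev 2015, \S 4) and arranges its last step to output the distinguished tetrahedron directly. But your execution of the second stage has a concrete gap. The ``book move'' as you describe it does not in general return a triangulation of $\mathbb{S}^3$. The boundary of the book-closed tetrahedron consists of the two free faces $A$ and $B$, and on each of them two of the three edges are already identified with each other by the book gluing. Hence gluing $A$ and $B$ to the two copies of $F_0$ forces the two edges of $F_0$ not matched with the shared edge $LR$ to become identified with each other in the quotient. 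If those two edges were distinct edge classes before the move, a count gives: the number of tetrahedra goes up by one, the number of edge classes stays the same (one new class for the spine, but two old classes merge), so the Euler characteristic of the quotient becomes $1$ instead of $0$ and the vertex link is a torus --- the result is not a manifold. So the move is only legitimate for a face $F_0$ whose two non-distinguished edges are already identified, and producing such a face is precisely the nontrivial step you have not addressed; in particular the claim that the insertion ``is a composition of Pachner moves'' cannot hold in general. Separately, you \emph{declare} the new spine to be a representative of $K$, but the proposition is about a given knot, so you must actually verify that the spine $\distE$ is isotopic in $\mathbb{S}^3$ to the original edge representing $K$; this is not automatic from the local picture.

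The first stage also leans on an assertion rather than an argument: reducing to a single vertex while keeping $K$ a single edge is exactly the existence theorem for one-vertex H-triangulations, and ``whenever a contraction would violate a link condition it can be restored by a preliminary Pachner move or subdivision'' is the whole difficulty (subdivisions reintroduce vertices, and edge contractions in singular triangulations can fail irreparably near $K$). That result is true and the paper likewise outsources it to a cited algorithm, so this part is a forgivable black box; but the book move is where your proof, as written, would actually fail. A safer route to the second stage is the one implicit in the paper's Section~2: build the distinguished tetrahedron into the construction itself (reverse the collapse that turns a particular H-triangulation into an ideal triangulation, inserting the book tetrahedron along an edge of the link of the ideal vertex), rather than grafting it onto an arbitrary face of an arbitrary one-vertex H-triangulation.
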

\begin{proof}
There exist algorithms for one vertex H-triangulations of couples $(\mathbb{S}^3,K)$. See, for example, \cite{Kashaev2015}, section 4. A slight modification of this algorithm allows one to get a particular H-triangulation: at the last stage of the decomposition of the 3-cell onto several tetrahedra, one has to extract a distinguished tetrahedron, and the previous steps of the algorithm always make this possible.
\end{proof}

From \tauh, one gets a cell decomposition of another manifold by removing an open neighbourhood of the vertex. This is equivalent to truncating the tetrahedra, so that the new 3-cells are bounded by \emph{triangular} and \emph{hexagonal} faces. The triangular faces are bounded by \emph{short edges}, and hexagonal faces are bounded by short and \emph{long edges} which are remnant of the edges of \tauh. The obtained manifold is the complement in $M$ of an open ball. Such a cell decomposition will be denoted \tauhtron\ and the set of $k$ dimensional cells in \tauhtron\ will be denoted $\trigdimhtron{k}$. Figure \ref{truncated distinguished tetrahedron} is an example of a truncated tetrahedron.
We will denote $\hex\subset \trigdimhtron{2} / \recolh$ and $\mathcal{\tilde{H}}\subset \trigdimhtron{2}$ the sets of non distinguished hexagonal faces, $\tri$ the set of triangular faces disjoint from the distinguished edge, $\mathcal{L}\subset \trigdimhtron{1} / \recolh$ and $\mathcal{\tilde{L}}\subset \trigdimhtron{1}$ the sets of non distinguished long edges, $\mathcal{S}\subset \trigdimhtron{1} / \recolh$, and $\mathcal{\tilde{S}}\subset \trigdimhtron{1}$ the sets of short edges disjoint from the knot.

An ideal triangulation of a connected, oriented, cusped manifold $N$, is defined to consist of a pairwise disjoint union of oriented euclidian tetrahedra $\trigi=\bigsqcup_{i=1}^n\Delta^{3,I}_i$, together with a set $\recoli$ of orientation-reversing affine isomorphisms pairing the faces of the tetrahedra in $\trigi$ so that $N=\vari$. Such a triangulation will be denoted \taui. We will denote $\proji: \trigi\setminus \mathbf{\Delta^{0,I}} \rightarrow \vari$ the identification projection.
In the following, we will consider a knot $K$ embedded in a 3-manifold $M$ and ideal triangulations of $N=M\setminus K$, such that $\trigi / \recoli$ is a pseudomanifold having one singular point, its only vertex, corresponding to $K$. The tetrahedra of \taui\ can be seen as hyperbolic ideal tetrahedra with vertices at infinity.

Let us remark that from a particular H-triangulation of a pair $(M,K)$, one can get an ideal triangulation of $M\setminus K$ by collapsing the distinguished edge to a point in such a way that the distinguished face \distF\ is collapsed to an edge while the two other faces of the distinguished tetrahedron (not bounded by distinguished edge) are identified with each other.
For example, in figure \ref{example of distinguished tetrahedron}, the edges with simple and double arrows are identified with each other; face $\distF$ collapses and faces $B$ and $A$ are identified with each other.
Then, each cell of \tauh\ different from the distinguished ones have a canonical corresponding cell in \taui.

One can find more information on triangulations of 3-manifolds in \cite{BenPetr, livre_Matveev}.

From an ideal triangulation of a knot complement in a 3-manifold, one can construct a ring \Ri\ as in the introduction.

\subsection{Definition of \Rh} \label{section def de Rh}

According to \cite{def_anneau}, the ring \Rhnoncom\ is defined from $\tauhtron$ with oriented short edges by the following presentation. The set of generators is given by associating to each oriented short edge $e\in\mathcal{S}$, disjoint with the distinguished edge, a pair of generators $(u_e,v_e)$; and the set of relations:
\begin{itemize}
	\item if $\bar{e}$ is the edge $e$ with opposite orientation, $u_{\bar{e}}=u_e^{-1}$ and $v_{\bar{e}}=-u_e^{-1}v_e$;
	\item \emph{hexagonal face relation}: if $\check{e}$ is the unique oriented short edge such that it belongs to the same hexagonal face as $e$, and the terminal points of $e$ and $\check{e}$ form the boundary of a long edge, then $u_{\check{e}}=v_e$ and $v_{\check{e}}=u_e$;
	\item \emph{triangular face relations}: if $e_1, e_2, e_3$ are cyclically oriented short edges constituting the boundary of a triangular face, then $u_{e_1}u_{e_2}u_{e_3}=1$ and $u_{e_1}u_{e_2}v_{e_3}+u_{e_1}v_{e_2}+v_{e_1}=0$.
\end{itemize}
One can easily check that there exists a representation of the knot  group onto $GL(2,\Rhnoncom)$ by associating the matrix 
$\begin{pmatrix}
	u_e&v_e\\0&1
\end{pmatrix}$ to each oriented short edge disjoint from the distinguished edge and the matrix
$\begin{pmatrix}
	0&1\\1&0
\end{pmatrix}$ to each (oriented) long edge different from the distinguished one.

Note that the ring \Rhnoncom\ is not necessarily commutative. We will denote by \Rh\ the abelianization of \Rhnoncom.

\begin{lemma}\label{relations for the distinguished tetrahedra}
	Let the truncated distinguished tetrahedron be labelled as in figure \ref{truncated distinguished tetrahedron}, then $u_l=u_m$, $u_p=v_m^{-1}v_l$, $v_p=0$.
\end{lemma}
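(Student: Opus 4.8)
The plan is to derive the three identities formally, by writing down the defining relations of \Rhnoncom\ for the short edges of the truncated distinguished tetrahedron together with the identifications forced by the way \distT\ is glued to itself, and then solving the resulting small system.

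First I would fix the combinatorial dictionary supplied by figure \ref{truncated distinguished tetrahedron}: which short edges (and with which orientations) carry the generator pairs $(u_l,v_l)$, $(u_m,v_m)$, $(u_p,v_p)$; which short edges lie on the two non-distinguished hexagonal faces of \distT\ and which lie on the distinguished face \distF; and how the orientation-reversing pairing in \recolh\ that glues \distT\ to itself ``as a closed book'' permutes these cells. The structural facts I would read off are: this pairing fixes the distinguished edge \distE\ (the spine of the book), so it restricts to a reflection of the link triangle at each of the two endpoints of \distE, and this reflection sends the unique short edge of such a link triangle that is disjoint from \distE\ to itself with the opposite orientation; moreover the pairing identifies the two remaining link triangles with one another and the two non-distinguished hexagonal faces with one another. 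This is the truncated counterpart of the collapse picture described after figure \ref{example of distinguished tetrahedron}, where \distF\ collapses and the faces $A$ and $B$ get identified.

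Next I would run the relations of \Rhnoncom\ against this data. Applying the edge-reversal relations $u_{\bar e}=u_e^{-1}$ and $v_{\bar e}=-u_e^{-1}v_e$ to a short edge that the closed-book reflection carries to its own reverse produces an identity $u_e^2=1$ together with $(1+u_e^{-1})v_e=0$. Transporting this across the two identified non-distinguished hexagonal faces via the hexagonal-face relations $u_{\check e}=v_e$, $v_{\check e}=u_e$, and combining with the triangular-face relations $u_{e_1}u_{e_2}u_{e_3}=1$ and $u_{e_1}u_{e_2}v_{e_3}+u_{e_1}v_{e_2}+v_{e_1}=0$ on the two link triangles whose boundaries consist entirely of generator-carrying short edges, should yield $u_l=u_m$ and force the linear triangular relation involving $p$ to degenerate to $v_p=0$. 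Feeding $v_p=0$ back into the linear triangular relation of the triangular face incident to $l$, $m$ and $p$ and rearranging then gives $u_p=v_m^{-1}v_l$; note that $v_m$ is invertible in \Rhnoncom\ — as is every $v$-generator of a short edge lying on a non-distinguished hexagonal face, being equal to a $u$-generator by a hexagonal-face relation — so the right-hand side makes sense.

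The difficulty here is bookkeeping rather than any essential idea: one must track the orientations of the short edges under the closed-book pairing and under the cyclic orderings appearing in the triangular-face relations carefully enough that the signs and the order of the (noncommutative) products come out correctly. The point that requires the most care is verifying that the relevant short edge is genuinely sent to its own reverse and that the full system really forces $v_p=0$, and not merely a weaker relation such as $(1+u)v_p=0$. Once the dictionary from figure \ref{truncated distinguished tetrahedron} is fixed, each of the three identities should be at most a two-line computation.
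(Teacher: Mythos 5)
The structural dictionary you read off from the closed-book gluing is wrong in two places, and your derivation leans on both. The closed-book pairing identifies the two faces of \distT\ containing \distE\ (the two copies of \distF\ in figure \ref{example of distinguished tetrahedron}); at a given endpoint of \distE\ it sends the short edge lying on one copy of \distF\ to the short edge lying on the other copy. These are two \emph{distinct} edges of that link triangle, both adjacent to the truncated \distE; the opposite short edge ($l$, resp.\ $m$) lies only on the non-distinguished hexagonal faces and on its link triangle, so the pairing does not touch it. Hence no short edge is sent to its own reverse, and the relations $u_e^2=1$ and $(1+u_e^{-1})v_e=0$ you want to extract are simply not available --- they would be additional relations that do not hold in \Rhnoncom. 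Likewise, the two non-distinguished hexagonal faces ($A$ and $B$) are \emph{not} identified with each other in the H-triangulation; that identification only appears in the induced ideal triangulation after collapsing \distE. With both premises gone, the step ``should yield $u_l=u_m$ and force $v_p=0$'' has no support, so this is a genuine gap rather than a bookkeeping issue.

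What the figure actually provides, and what the paper uses, is more elementary. The two link triangles at the vertices of \distT\ \emph{not} on \distE\ have all three short edges disjoint from the distinguished edge (the triangles $j_1,j_2,p$ and $i_1,i_2,p$ of figure \ref{truncated distinguished tetrahedron}; that the edge $p$ occurs in both is the only input from the closed-book pairing). One writes their two pairs of triangular face relations and substitutes for $u_{i_k},v_{i_k},u_{j_k},v_{j_k}$ via the hexagonal relations of the faces $A$ and $B$, which express them through $(u_l,v_l)$ and $(u_m,v_m)$. The two multiplicative relations become $v_mu_pv_l^{-1}=1$ and $u_m^{-1}v_mu_pv_l^{-1}u_l=1$; comparing them gives $u_l=u_m$, and the first gives $u_p=v_m^{-1}v_l$ directly. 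Substituting back into the additive relation of one of these triangles leaves $v_mv_p=0$, hence $v_p=0$ since $v_m$ is invertible (your observation that $v_m$ equals a $u$-generator by a hexagonal relation is correct and is exactly the reason). Note in particular that $u_p=v_m^{-1}v_l$ comes from a multiplicative triangle relation, not from feeding $v_p=0$ into a linear relation of a face ``incident to $l$, $m$ and $p$'': no such triangular face exists, since $l$, $m$ and $p$ lie in three different link triangles.
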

\begin{proof}
	Let us write the two triangular face relations where all generators with indices $i_1, i_2, j_1, j_2$ are expressed in terms of $(u_l,v_l)$ and $(u_m,v_m)$ through the use of the hexagonal faces relations.

For the left hand triangle:
\begin{align}
	1&=v_mu_pv_l^{-1} \label{eq1} \\
	0&=-v_mu_pv_l^{-1}u_l+v_mv_p+u_m \label{eq2}
\end{align}
and the right hand triangle:
\begin{align}
	1&=u_m^{-1}v_mu_pv_l^{-1}u_l \label{eq3} \\ 
	0&=-u_m^{-1}v_mu_p(v_l^{-1}u_l)u_l^{-1}+u_m^{-1}v_mv_p+u_m^{-1} \label{eq4}
\end{align}
Equations (\ref{eq1}) and (\ref{eq3}) imply that $u_m=u_l$. Then (\ref{eq2}) implies that $v_p=0$ because $v_m$ is invertible.
\end{proof}

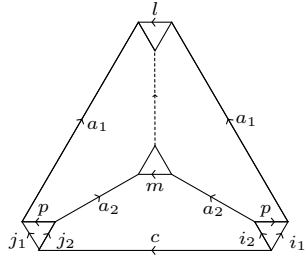
\begin{figure}
\begin{center}
	\begin{tikzpicture}[line cap=round,line join=round,>=triangle 45,x=0.4372027811400509cm,y=0.43673740521645454cm]
\clip(-1.24,-6.4) rectangle (8.2,1.61);
\draw (3.5,0.06)-- (3,0.93);
\draw (0,-6)-- (0.5,-5.13);
\draw (0.3,-5.49) -- (0.35,-5.63);
\draw (0.3,-5.49) -- (0.15,-5.51);
\draw (0.5,-5.13)-- (-0.5,-5.13);
\draw (-0.09,-5.13) -- (0,-5.02);
\draw (-0.09,-5.13) -- (0,-5.25);
\draw (7,-6)-- (6.5,-5.13);
\draw (6.7,-5.49) -- (6.85,-5.51);
\draw (6.7,-5.49) -- (6.65,-5.63);
\draw (6.5,-5.13)-- (7.5,-5.13);
\draw (7.09,-5.13) -- (7,-5.25);
\draw (7.09,-5.13) -- (7,-5.02);
\draw (3.5,-2.82)-- (4,-3.69);
\draw (3,0.93)-- (3.5,0.06);
\draw (3.5,-2.82)-- (3,-3.69);
\draw (0.5,-5.13)-- (-0.5,-5.13);
\draw (-0.5,-5.13)-- (3,0.93);
\draw (1.3,-2.02) -- (1.35,-2.16);
\draw (1.3,-2.02) -- (1.15,-2.04);
\draw (3.5,0.06)-- (4,0.93);
\draw (4,0.93)-- (7.5,-5.13);
\draw (7.5,-5.13)-- (6.5,-5.13);
\draw (6.5,-5.13)-- (4,-3.69);
\draw (5.17,-4.37) -- (5.31,-4.31);
\draw (5.17,-4.37) -- (5.19,-4.52);
\draw (7,-6)-- (6.5,-5.13);
\draw (4,-3.69)-- (3,-3.69);
\draw (3.41,-3.69) -- (3.5,-3.57);
\draw (3.41,-3.69) -- (3.5,-3.81);
\draw (0.5,-5.13)-- (0,-6);
\draw (7,-6)-- (7.5,-5.13);
\draw (7.3,-5.49) -- (7.35,-5.63);
\draw (7.3,-5.49) -- (7.15,-5.51);
\draw (7.5,-5.13)-- (4,0.93);
\draw (5.7,-2.02) -- (5.85,-2.04);
\draw (5.7,-2.02) -- (5.65,-2.16);
\draw (4,0.93)-- (3,0.93);
\draw (3.41,0.93) -- (3.5,1.05);
\draw (3.41,0.93) -- (3.5,0.81);
\draw (3,0.93)-- (-0.5,-5.13);
\draw (0.5,-5.13)-- (3,-3.69);
\draw (1.83,-4.37) -- (1.81,-4.52);
\draw (1.83,-4.37) -- (1.69,-4.31);
\draw (7,-6)-- (0,-6);
\draw (3.41,-6) -- (3.5,-5.88);
\draw (3.41,-6) -- (3.5,-6.12);
\draw [dash pattern=on 1pt off 1pt] (3.5,-2.82)-- (3.49,0.08);
\draw [dash pattern=on 1pt off 1pt] (3.5,-1.28) -- (3.61,-1.37);
\draw [dash pattern=on 1pt off 1pt] (3.5,-1.28) -- (3.38,-1.37);
\draw (0.02,-5.97)-- (-0.5,-5.13);
\draw (-0.29,-5.48) -- (-0.14,-5.49);
\draw (-0.29,-5.48) -- (-0.34,-5.62);
\begin{scriptsize}
\draw[color=black] (0.84,-5.71) node {$j_2$};
\draw[color=black] (0.14,-4.76) node {$p$};
\draw[color=black] (6.29,-5.65) node {$i_2$};
\draw[color=black] (6.83,-4.8) node {$p$};
\draw[color=black] (1.78,-2.23) node {$a_1$};
\draw[color=black] (5.26,-4.8) node {$a_2$};
\draw[color=black] (3.5,-4.06) node {$m$};
\draw[color=black] (7.81,-5.77) node {$i_1$};
\draw[color=black] (6.25,-2.06) node {$a_1$};
\draw[color=black] (3.51,1.35) node {$l$};
\draw[color=black] (2.12,-4.73) node {$a_2$};
\draw[color=black] (3.5,-5.62) node {$c$};
\draw[color=black] (-0.61,-5.71) node {$j_1$};
\end{scriptsize}
\end{tikzpicture}
\caption{Truncated distinguished tetrahedron \label{truncated distinguished tetrahedron}}
\end{center}
\end{figure}

\section{A ring homomorphism from \Ri\ to \Rh} \label{morphism}

We define a ring homomorphism $f:\Ri\rightarrow\Rh$ as follows.
Let $E$ be an edge of a tetrahedron of the ideal triangulation with shape parameter $z$. Let us still call $E$ the corresponding long edge in $\mathcal{\tilde{L}}$. We choose a boundary point of $E$ in \trigdimhtron{0}, and let $e_i$ and $e_j$ be the short edges in $\mathcal{\tilde{S}}$ sharing this point at their origins and such that $E,e_i,e_j$ are clockwisely ordered.
Let $(u_i,v_i)$ and $(u_j,v_j)$ be the couples of generators in \Rh\ assigned to $e_i$ and $e_j$. We define, $f(z)=v_iv_j^{-1}$ (see figure \ref{definition of f}).

\begin{remark}\label{autre def de f}
	Keeping the notation of figure \ref{definition of f}, $f$ can be equivalently defined as $f(z)=u_pu_q^{-1}$.
\end{remark}

\begin{figure}
	\begin{center}
		\begin{tikzpicture}[line cap=round,line join=round,>=triangle 45,x=0.3961178933722044cm,y=0.4219697979922829cm]
\clip(-0.87,-6.75) rectangle (7.97,1.54);
\draw (3.5,0.06)-- (3,0.93);
\draw (3.2,0.57) -- (3.35,0.55);
\draw (3.2,0.57) -- (3.15,0.44);
\draw (-0.5,-5.13)-- (0,-6);
\draw (0,-6)-- (0.5,-5.13);
\draw (0.5,-5.13)-- (-0.5,-5.13);
\draw (-0.09,-5.13) -- (0,-5.02);
\draw (-0.09,-5.13) -- (0,-5.25);
\draw (7.5,-5.13)-- (7,-6);
\draw (7,-6)-- (6.5,-5.13);
\draw (6.5,-5.13)-- (7.5,-5.13);
\draw (7.09,-5.13) -- (7,-5.25);
\draw (7.09,-5.13) -- (7,-5.02);
\draw (3.5,-2.82)-- (4,-3.69);
\draw (3.8,-3.34) -- (3.65,-3.32);
\draw (3.8,-3.34) -- (3.85,-3.2);
\draw (3,0.93)-- (3.5,0.06);
\draw (3.5,0.06)-- (3.5,-2.82);
\draw (3.5,-2.82)-- (3,-3.69);
\draw (3.2,-3.34) -- (3.15,-3.2);
\draw (3.2,-3.34) -- (3.35,-3.32);
\draw (3,-3.69)-- (0.5,-5.13);
\draw (0.5,-5.13)-- (-0.5,-5.13);
\draw (-0.5,-5.13)-- (3,0.93);
\draw (3.5,0.06)-- (4,0.93);
\draw (3.8,0.57) -- (3.85,0.44);
\draw (3.8,0.57) -- (3.65,0.55);
\draw (4,0.93)-- (7.5,-5.13);
\draw (7.5,-5.13)-- (6.5,-5.13);
\draw (6.5,-5.13)-- (4,-3.69);
\draw (3.5,-2.82)-- (3.5,0.06);
\draw (7,-6)-- (6.5,-5.13);
\draw (6.5,-5.13)-- (4,-3.69);
\draw (4,-3.69)-- (3,-3.69);
\draw (3.41,-3.69) -- (3.5,-3.57);
\draw (3.41,-3.69) -- (3.5,-3.81);
\draw (3,-3.69)-- (0.5,-5.13);
\draw (0.5,-5.13)-- (0,-6);
\draw (0,-6)-- (7,-6);
\draw (7,-6)-- (7.5,-5.13);
\draw (7.5,-5.13)-- (4,0.93);
\draw (4,0.93)-- (3,0.93);
\draw (3.41,0.93) -- (3.5,1.05);
\draw (3.41,0.93) -- (3.5,0.81);
\draw (3,0.93)-- (-0.5,-5.13);
\draw (-0.5,-5.13)-- (0,-6);
\draw (0,-6)-- (7,-6);
\begin{scriptsize}
\draw[color=black] (2.95,0.21) node {i};
\draw[color=black] (0.14,-4.76) node {p};
\draw[color=black] (6.83,-4.8) node {q};
\draw[color=black] (4.28,-3.09) node {n};
\draw[color=black] (3.13,-1.23) node {z};
\draw[color=black] (2.88,-3.1) node {k};
\draw[color=black] (4.04,0.21) node {j};
\draw[color=black] (3.5,-4.06) node {m};
\draw[color=black] (3.51,1.35) node {l};
\end{scriptsize}
\end{tikzpicture}
\caption{$f(z)=v_iv_j^{-1}$ \label{definition of f}}
	\end{center}
\end{figure}
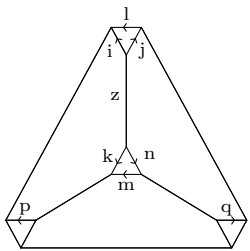

Let us check that $f$ is a well defined ring homomorphism.

	First, the definition of $f$ does not depend on the choice of the boundary point of E: if $e_k=\overline{\check{\bar{e_i}}}$ and $e_n=\overline{\check{\bar{e_j}}}$, then $v_i=v_k^{-1}$ and $v_j=v_n^{-1}$ so that $v_iv_j^{-1}=v_nv_k^{-1}$.

Let $E_1, E_2, E_3$ be three long edges counter-clockwisely ordered around a vertex of a tetrahedron in \trigi, with shape parameters $z_1,z_2,z_3$, and let $e_i, e_j, e_k$ be three short edges bounding a triangular face obtained after truncation along the corresponding vertex and such that $E_1, e_i, e_j$ and $E_2, \overline{e_j}, e_k$ are clockwisely ordered. Then, the triangular face relations are $u_ju_ku_i^{-1}=1$ and $-v_i+u_j v_k+v_j=0$.

	So, 
\begin{equation*}
	f(z_1)f(z_2)f(z_3)=(v_iv_j^{-1})(-u_j^{-1}v_jv_k^{-1})(u_k^{-1}v_kv_i^{-1}u_i)=-u_j^{-1}u_k^{-1}u_i=-1
\end{equation*}

	and 
\begin{equation*}
	f(z_2)-f(z_1)f(z_2)=-u_j^{-1}v_jv_k^{-1}+v_iv_j^{-1}u_j^{-1}v_jv_k^{-1}=u_j^{-1}(-v_j+v_i)v_k^{-1}=1
\end{equation*}
	
Let us now check that the gluing equations are respected.
If the ideal triangulation of the knot complement contains an edge with only one preimage by \proji, then both \Ri\ and \Rh\ are the ring with one element and all conditions are trivially satisfied.

Before considering the general case, let us first remark the following. Let two triangles in $\tri$ be glued together along an oriented short edge $e_j$ with initial point corresponding to long edges with shape parameters $z_1$ and $z_2$.
Let $e_i,e_j,e_k$ be short edges clockwisely ordered as in figure \ref{glued triangles}.
	Then,	$f(z_1)f(z_2)=(v_iv_j^{-1})(v_jv_k^{-1})=v_iv_k^{-1}$.

	Let us now consider separately few different cases. First, if an edge $E$ in the ideal triangulation has no corresponding one in a distinguished tetrahedron of the H-triangulation, then the product of the $f(z_i)$ around $E$ telescopically reduces to 1. 

Second, $E$ has a corresponding edge opposite to the distinguished edge in the distinguished tetrahedron. Then, after telescopication, keeping notations of lemma \ref{relations for the distinguished tetrahedra}, the product of $f(z_i)$'s around $E$ reduces to $v_{\check{m}}v_{\check{l}}^{-1}$ which is equal to $1$ due to lemma \ref{relations for the distinguished tetrahedra}.

Third case, $E$ has a corresponding edge in the distinguished tetrahedron in the boundary of the distinguished face. Again, and after telescopication, the product of $f(z_i)$'s around $E$ reduces to a product of parameters $v_i$ which is equal to 1. In order to see this, it is easier to use the equivalent definition of $f$ : $f(z)=u_pu_q^{-1}$ as in remark \ref{autre def de f}. Then, by using the notation of figure 2, the edge $a_2$ will contribute $u_{i_2}u_{j_2}^{-1}$ and the edge $a_1$ will contribute $u_{i_1}^{-1}u_{j_1}$ (both $a_k$ appear twice in the distinguished tetrahedron). The product of those two contributions is 1 due to hexagonal face relations of figure \ref{truncated distinguished tetrahedron}.

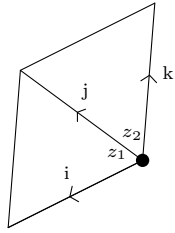
\begin{figure}
	\begin{center}
\begin{tikzpicture}[line cap=round,line join=round,>=triangle 45,x=0.7653061224489793cm,y=0.8177570093457945cm]
\clip(-2.12,-1.42) rectangle (1.8,2.86);
\draw [line width=0.4pt] (0.86,0)-- (-1.24,1.46);
\draw [line width=0.4pt] (-0.26,0.78) -- (-0.12,0.83);
\draw [line width=0.4pt] (-0.26,0.78) -- (-0.26,0.63);
\draw [line width=0.4pt] (-1.24,1.46)-- (-1.45,-1.09);
\draw [line width=0.4pt] (-1.45,-1.09)-- (0.86,0);
\draw (0.86,0)-- (-1.45,-1.09);
\draw (-0.39,-0.59) -- (-0.35,-0.42);
\draw (-0.39,-0.59) -- (-0.24,-0.67);
\draw (0.86,0)-- (1.07,2.55);
\draw (0.98,1.38) -- (1.1,1.26);
\draw (0.98,1.38) -- (0.83,1.29);
\draw (1.07,2.55)-- (-1.24,1.46);
\begin{scriptsize}
\draw [color=black] (0.86,0) circle (1.5pt);
\draw[color=black] (0.42,0.1) node {$z_1$};
\draw[color=black] (-0.12,1.12) node {j};
\draw[color=black] (-0.44,-0.2) node {i};
\fill [color=black] (0.86,0) circle (2.5pt);
\draw[color=black] (0.68,0.42) node {$z_2$};
\draw[color=black] (1.3,1.42) node {k};
\end{scriptsize}
\end{tikzpicture}
\caption{Glued triangles \label{glued triangles}}
	\end{center} 

\end{figure}

\begin{Th}\label{theorem}
	If $H_1(M,\mathbb{Z})=0$, then $f$ is a ring isomorphism.
\end{Th}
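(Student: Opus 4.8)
The plan is to produce an explicit two‑sided inverse $g\colon\Rh\to\Ri$. Because $\Rh$ is the abelianisation of $\Rhnoncom$ and $\Rhnoncom$ has the presentation recalled in Section~\ref{section def de Rh}, giving a ring homomorphism out of $\Rh$ amounts to choosing, for every oriented short edge $e$ disjoint from \distE, elements $g(u_e),g(v_e)\in\Ri$ with $g(u_e)$ invertible, so that the reversal, hexagonal‑face and triangular‑face relations all hold in $\Ri$. We build this assignment from the shape parameters of \taui\ by Thurston's developing map: a short edge $e$ is a corner of a truncated tetrahedron of \tauhtron, hence corresponds to a corner of an ideal tetrahedron of \taui\ and is thereby attached to the three shape parameters $z_1,z_2,z_3$ (with $z_1z_2z_3=-1$) of that tetrahedron; $g(u_e)$ will record the derivative of the affine holonomy across $e$ (a Laurent monomial in the $z_i$, compatible with Remark~\ref{autre def de f}) and $g(v_e)$ the associated translation term, normalised on the distinguished tetrahedron through Lemma~\ref{relations for the distinguished tetrahedra} (so that $g(v_p)=0$ and $g(u_l)=g(u_m)$), exactly as $\Rhnoncom$ itself is read off from \tauhtron.

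The heart of the argument is that this assignment is single‑valued, and this is the only place the hypothesis is used. The obstruction to coherence is the affine holonomy of the boundary torus of $M\setminus K$: the translation parameters $g(v_e)$ are a priori defined only up to the multiplicative cocycle $\gamma\mapsto\prod z_i^{\pm1}$ attached to loops $\gamma$ in the cusp triangulation, so they glue consistently only if that cocycle is suitably trivial. When $H_1(M,\mathbb Z)=0$ one has $H_1(M\setminus K,\mathbb Z)\cong\mathbb Z$, torsion‑free and generated by the meridian with null‑homologous longitude; this, together with the fact that \tauhtron\ is a triangulation of $M$ with an open ball removed whose boundary sphere is simply connected (which is precisely why the generators $(u_e,v_e)$ are globally well defined in the first place), makes the obstruction vanish and $g$ well defined. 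One then checks that the relations become identities in $\Ri$: reversal becomes a trivial inversion; the hexagonal‑face relations become the equality of the shape parameters of opposite edges of an ideal tetrahedron, which is respected by $\recoli$; and the triangular‑face relations become the two defining relations $z_1z_2z_3=-1$ and $z_2-z_1z_2=1$ of the shape of an ideal tetrahedron. These are exactly the computations used above to prove that $f$ is a homomorphism, read backwards, the distinguished tetrahedron being handled by Lemma~\ref{relations for the distinguished tetrahedra}.

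It then remains to see that $f$ and $g$ are mutually inverse. For $g\circ f=\mathrm{id}_{\Ri}$ it suffices, since $\Ri$ is generated by the shape parameters, to check $g(f(z))=z$; by $f(z)=v_iv_j^{-1}$ and the explicit formulas for $g(v_i),g(v_j)$ this is a single identity between shape parameters of one ideal tetrahedron (using $z_1z_2z_3=-1$). For $f\circ g=\mathrm{id}_{\Rh}$ one checks on the generators: after using the hexagonal relations to rewrite each $v_e$ through a $u$‑parameter and the triangular relations to eliminate, $f(g(u_e))$ and $f(g(v_e))$ reduce to the same shape‑parameter identities and return $u_e$ and $v_e$. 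Hence $f$ is a ring isomorphism.

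The step I expect to be the real obstacle is the second one: writing down the developing‑map formulas for $(g(u_e),g(v_e))$ explicitly, in particular on and around the distinguished tetrahedron, and proving that the homological obstruction to their global coherence is exactly the class killed by $H_1(M,\mathbb Z)=0$. That the hypothesis cannot be dropped is illustrated by the examples of Section~\ref{examples}.
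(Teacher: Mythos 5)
There is a genuine gap at the centre of your argument, and it is exactly the step you yourself flag as ``the real obstacle'': you never actually construct $g$ on the generators $u_e$, $v_e$, and the mechanism you propose for doing so is not the right one. The image of $f$ is a priori generated by ratios $v_iv_j^{-1}$ (equivalently $u_pu_q^{-1}$), i.e.\ by products of an \emph{even} number of generators of \Rh; the whole difficulty of the theorem is to show that a \emph{single} generator $u_e$ lies in $\mathrm{Im}(f)$, which is the same as defining $g(u_e)\in\Ri$. There is no local ``developing map'' formula for this: $g(u_e)$ is not a Laurent monomial in the shape parameters of the tetrahedron containing $e$. The paper's construction is global: one first proves that for \emph{parallel} short edges $e,e'$ (both running alongside preimages of the same long edge $E$) there is $m\in\Ri$ with $f(m)=u_eu_{e'}^{-1}$, by telescoping shape parameters around an endpoint of $E$ (Lemma~\ref{Lemme}); then one uses the fact that in the one-vertex triangulation of the \emph{closed} manifold $M$ every non-distinguished edge $A$ is a boundary, $A=\sum_F\signe{A}(F)\bound{2}(F)$ in $\nchain{1}(M)$ because $H_1(M,\mathbb{Z})=0$, to write $u_{s(A)}$ as a telescoping product of such parallel-edge ratios, each of which is in $\mathrm{Im}(f)$. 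Your proposal instead locates the obstruction in ``the affine holonomy of the boundary torus of $M\setminus K$'' and in $H_1(M\setminus K)\cong\mathbb{Z}$; but that group is nontrivial for \emph{every} knot complement, so it cannot be what distinguishes the homology-sphere case from the counterexamples of Section~\ref{examples} (for $\mathbb{S}^2\times\mathbb{S}^1$ and $L(3,1)$ the failure of surjectivity is governed by $H_1(M)$ of the closed manifold, not by the cusp). Asserting that the obstruction ``vanishes'' because the boundary of the removed ball is a sphere is not an argument.

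A second, smaller omission: even once the formula $u_{s(A)}=\prod_{F}\prod_{E}\bigl(u_{s\circ\projh(E)}/u_{r(F,E)}\bigr)^{\signe{A}(F)}$ is in place, one must check that the resulting element of \Ri\ is independent of the chosen $2$-chain $\signe{A}$, of the section $\sigma$, and of the choice of parallel short edge $s(E)$ for each long edge. The first of these uses $H_2(M,\mathbb{Z})=0$ (which follows from $H_1(M,\mathbb{Z})=0$ by Poincar\'e duality) together with the gluing equations; your sketch does not mention $H_2$ at all. The parts of your proposal that do go through --- that the triangular and hexagonal face relations translate into the relations $z_1z_2z_3=-1$ and $z_2-z_1z_2=1$, and that the distinguished tetrahedron is handled by Lemma~\ref{relations for the distinguished tetrahedra} --- agree with the paper, but they concern only the easy direction (that $f$ is a well-defined homomorphism and that $g$, once constructed, respects relations). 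The surjectivity mechanism, which is where the hypothesis $H_1(M,\mathbb{Z})=0$ genuinely enters, is missing.
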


\begin{remark}
	According to Poincaré duality and Universal coefficients theorem, if $H_1(M,\mathbb{Z})=0$, then $H_2(M,\mathbb{Z})=0$.
\end{remark}

\section{Proof of theorem \ref{theorem}} \label{proof}

First of all, let us clear up the particular cases where the ideal triangulation of a knot complement contains an edge with only one preimage by $\proji$. Such an edge must be in a tetrahedron closed as a book around it. Then, the shape parameter of that edge must be equal to $1$, which means that the deformation variety is empty. As a consequence, the ring $\Ri$ is the ring with one element. Besides, according to lemma \ref{relations for the distinguished tetrahedra}, in that case, there is an invertible element in \Rh\ equal to 0. So \Rh\ is also the ring with one element and the isomorphism between \Ri\ and \Rh\ is trivial.

For the general case, let us first remark that the image of an element of \Ri\ by $f$ is a priori a product of an even number of elements in \Rh. Then, in order to prove the surjectivity of $f$, the main difficulty is to write an equality in \Rh\ with one generating element on the left hand side and an even number of generating elements on the right hand side. For that, we will use the triviality of $H_1(M,\mathbb{Z})$.

From now on, we will say that an oriented short edge $e\in\mathcal{\tilde{S}}$ is \emph{parallel} (resp. \emph{antiparallel}) to an oriented long edge $E$ of $\mathcal{\tilde{L}}$ if there exists $E'$ of $\projh^{-1}(\projh(E))$ such that $e$ and $E'$ are disjointly contained in the boundary of one and the same hexagonal face $F$, and if the orientations of $\bar{e}$ (resp. $e$) and $E'$ are induced from the orientation of $F$.
We will also say that two oriented short edges $e$ and $e'$ are \emph{parallel} (resp. \emph{antiparallel}) if there exists a long edge $E$ of $\mathcal{\tilde{L}}$ such that $e$ and $e'$ are both parallel or antiparallel (resp. one is parallel and another one is antiparallel) to $E$.
We will also say that an edge $e\in\mathcal{S}$ is parallel (resp. antiparallel) to an edge $E\in\mathcal{L}$ if there exists a parallel (resp. antiparallel) preimage of $e$ and $E$ by $\projh$.
For instance, in figure \ref{truncated distinguished tetrahedron}, $m$, $c$ and $l$ are parallel.
\begin{lemma}\label{Lemme}
	If $e$ and $e'$ are parallel short edges in $\mathcal{S}$, then there exists $m\in\Ri$ such that $f(m)=u_eu_{e'}^{-1}$.
\end{lemma}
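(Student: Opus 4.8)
The goal is to realize the ratio $u_e u_{e'}^{-1}$ as $f(m)$ for some regular function $m \in \Ri$, given that the short edges $e$ and $e'$ are parallel. The plan is to exploit the matrix representation of the knot group: recall that the assignment $e \mapsto \begin{pmatrix} u_e & v_e \\ 0 & 1 \end{pmatrix}$ and (long edge) $\mapsto \begin{pmatrix} 0 & 1 \\ 1 & 0 \end{pmatrix}$ gives a homomorphism into $GL(2,\Rhnoncom)$. When $e$ and $e'$ are parallel there is a path in $\tauhtron$ connecting their initial vertices that runs along the boundary of hexagonal faces (crossing long edges) and around triangular faces; composing the corresponding matrices expresses $\begin{pmatrix} u_e & v_e \\ 0 & 1\end{pmatrix}$ as a conjugate (or product) of $\begin{pmatrix} u_{e'} & v_{e'} \\ 0 & 1\end{pmatrix}$ by a word $W$ whose entries are, in the abelianization \Rh, products of the $v_i$'s — precisely the quantities $f(z)$ attaches to long edges. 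Reading off the top-left entry of this matrix identity in \Rh\ yields $u_e = w \, u_{e'}$ where $w$ is a product of various $f(z_i)^{\pm 1}$, hence $w = f(m)$ for an explicit Laurent monomial $m$ in the shape parameters, and $w$ is invertible since each $f(z_i)$ is (shape parameters lie in $\mathbb{C}\setminus\{0,1\}$, and $f$ respects $z_1 z_2 z_3 = -1$). Set $m$ to be that monomial.

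More concretely, I would first reduce to the case where $e$ and $e'$ both lie on the boundary of a single hexagonal face $F$ with a common long edge $E$ between them: in that situation the hexagonal face relation $u_{\check e} = v_e$, $v_{\check e} = u_e$ directly relates the generators, and combined with $f(z) = v_i v_j^{-1}$ (figure \ref{definition of f}) and the alternative form $f(z) = u_p u_q^{-1}$ (remark \ref{autre def de f}), one gets $u_e u_{e'}^{-1}$ as a ratio of $v$'s or $u$'s equal to $f(z_E)^{\pm 1}$ for the shape parameter $z_E$ of $E$. For the general case, parallelism (as defined just before the lemma via a preimage under $\projh$) chains together finitely many such elementary steps — each contributing one factor $f(z_i)^{\pm 1}$ — plus possibly passing through triangular faces, where the triangular face relations $u_{e_1}u_{e_2}u_{e_3} = 1$ translate, under $f$, into the gluing-type identity $f(z_1)f(z_2)f(z_3) = -1$ already verified in section \ref{morphism}. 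Multiplying the elementary relations gives $u_e u_{e'}^{-1} = \prod_i f(z_i)^{\varepsilon_i}$, and setting $m = \prod_i z_i^{\varepsilon_i} \in \Ri$ finishes it, using that $f$ is a ring homomorphism so $f(m) = \prod_i f(z_i)^{\varepsilon_i}$.

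The main obstacle I anticipate is bookkeeping: making precise what "chain of elementary steps" means for parallel edges and checking that at each step the accumulated factor is genuinely a well-defined element of $\Ri$ (i.e., a Laurent monomial in shape parameters that is a \emph{regular} function, not merely an element of the fraction field) — this is where invertibility of each $f(z_i)$, guaranteed by the relation $z_1 z_2 z_3 = -1$, is essential. A secondary subtlety is the distinguished tetrahedron: if the path defining the parallelism passes near \distE, one must use lemma \ref{relations for the distinguished tetrahedra} ($u_l = u_m$, $v_p = 0$, $u_p = v_m^{-1}v_l$) to handle the degenerate generators there, exactly as in the third case of the verification in section \ref{morphism}. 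Once the path is set up correctly, the computation is the same telescoping already used to check the gluing equations, so the heart of the lemma is really the combinatorial claim that parallel short edges are joined by such a path — which follows from unwinding the definition of "parallel" and the fact that hexagonal faces tile the boundary of each truncated tetrahedron.
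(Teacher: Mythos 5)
Your proposal is correct and takes essentially the same route as the paper's proof: both arguments produce $m$ as a product of the shape parameters of the tetrahedra arranged cyclically around the common long edge $E$, and obtain $f(m)=u_eu_{e'}^{-1}$ by telescoping the corresponding product of elementary ratios. The only cosmetic difference is that you telescope the $u$-generators of the parallel short edges directly via remark \ref{autre def de f} (note the elementary configuration is two short edges in the \emph{two} hexagonal faces of one tetrahedron adjacent to $E$, not two short edges of a single hexagonal face), whereas the paper first rewrites $u_eu_{e'}^{-1}=v_{\check{e}}v_{\check{e'}}^{-1}$ and telescopes $v$-generators along a path in the disk of triangles at an endpoint of $E$; the matrix-representation framing in your first paragraph is never actually used.
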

\begin{proof}
	We will prove that there exists $m\in\Ri$ such that $f(m)=v_{\check{e}}v_{\check{e'}}^{-1}$(see subsection \ref{section def de Rh} for the definition of $\check{e}$ and $\check{e'}$), which is equivalent to the lemma \ref{Lemme}. 

Let $E\in\mathcal{L}$ be parallel to $e$ and $e'$. Then, among the triangles containing the edge $E$ in \tauhtron\ there are some which also contain the edge $\check{e}$ or $\check{e'}$.
On the disk composed by the triangular faces containing a given end point of $E$, choose a path from $\check{e}$ to $\check{e'}$. Let $e_1,\ldots,e_n$ be short edges met by this path and oriented so that they all have the same starting point (here $\check{e}=e_1$ and $\check{e'}=e_n$). Let $\tilde{E}_1,\ldots,\tilde{E}_{n-1}\in\projh^{-1}(E)$ be long edges intersecting $e_1,\ldots,e_n$ in their starting point,  and let $z_1,\ldots,z_{n-1}$ be the corresponding parameters in the ideal triangulation (see figure \ref{image pour lemme}). Let $m=\prod_{i=1}^{n-1} z_i\in\Ri$.
According to the gluing equations in $\Ri$, $m$ does not depend on the choice of the path from $\check{e}$ to $\check{e'}$. Besides, $\prod_{i=1}^{n-1} f(z_i)=\prod_{i=1}^{n-1} v_iv_{i+1}^{-1}=v_1v_n^{-1}=v_{\check{e}}v_{\check{e'}}^{-1}$ (here, $(u_i,v_i)$ is the couple of generators 
in $\Rh$ associated to the short edge $e_i$).
\end{proof}

It will be usefull in the following to denote $m_{e\rightarrow e'}$ for such a word.

\begin{figure}
	\begin{center}
\begin{tikzpicture}[line cap=round,line join=round,>=triangle 45,x=0.4892367906066535cm,y=0.4726890756302519cm]
\clip(-1.7,-4.1) rectangle (8.52,5.42);
\draw [shift={(3.31,0.63)}] (0,0) -- (178.01:1.8) arc (178.01:214.01:1.8) -- cycle;
\draw [shift={(3.31,0.63)}] (0,0) -- (142.01:1.8) arc (142.01:178.01:1.8) -- cycle;
\draw [shift={(3.31,0.63)},dash pattern=on 1pt off 1pt] (0,0) -- (-1.99:1.8) arc (-1.99:142.01:1.8) -- cycle;
\draw [shift={(3.31,0.63)}] (0,0) -- (-37.99:1.8) arc (-37.99:-1.99:1.8) -- cycle;
\draw (3.31,0.63)-- (-0.26,-1.78);
\draw (1.44,-0.63) -- (1.45,-0.46);
\draw (1.44,-0.63) -- (1.6,-0.69);
\draw (3.31,0.63)-- (-0.99,0.78);
\draw (1.05,0.71) -- (1.16,0.84);
\draw (1.05,0.71) -- (1.15,0.57);
\draw (3.31,0.63)-- (-0.08,3.29);
\draw (1.53,2.02) -- (1.7,2.06);
\draw (1.53,2.02) -- (1.53,1.85);
\draw (3.31,0.63)-- (6.71,-2.02);
\draw (5.1,-0.76) -- (4.93,-0.8);
\draw (5.1,-0.76) -- (5.1,-0.59);
\draw (3.31,0.63)-- (7.62,0.48);
\draw (5.57,0.55) -- (5.46,0.42);
\draw (5.57,0.55) -- (5.47,0.69);
\draw [shift={(3.31,0.63)},dash pattern=on 2pt off 2pt]  plot[domain=-0.03:2.48,variable=\t]({1*4.31*cos(\t r)+0*4.31*sin(\t r)},{0*4.31*cos(\t r)+1*4.31*sin(\t r)});
\draw [shift={(3.31,0.63)},-<,dash pattern=on 1pt off 1pt] (-1.99:1.8) arc (-1.99:142.01:1.8);
\draw [shift={(3.31,0.63)},dash pattern=on 2pt off 2pt]  plot[domain=3.74:5.62,variable=\t]({1*4.31*cos(\t r)+0*4.31*sin(\t r)},{0*4.31*cos(\t r)+1*4.31*sin(\t r)});
\draw (-0.26,-1.78)-- (-0.99,0.78);
\draw (-0.99,0.78)-- (-0.08,3.29);
\draw (7.62,0.48)-- (6.71,-2.02);
\begin{scriptsize}
\draw[color=black] (1.48,-1.26) node {$e_1$};
\draw[color=black] (0.9,0.28) node {$e_2$};
\draw[color=black] (1.28,1.74) node {$e_3$};
\draw[color=black] (5.44,-1.34) node {$e_n$};
\draw[color=black] (6.34,0.16) node {$e_{n-1}$};
\draw[color=black] (2.58,0.3) node {$z_1$};
\draw[color=black] (2.58,0.86) node {$z_2$};
\draw[color=black] (4.28,0.28) node {$z_{n-1}$};
\end{scriptsize}
\end{tikzpicture}
\caption{$\prod_{i=1}^{n-1}f(z_i)=v_1v_n^{-1}$\label{image pour lemme}}
	\end{center}
\end{figure}
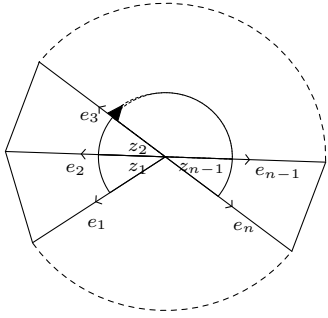

\begin{cor}\label{corollary}
	Let $e\in\mathcal{S}$ and $m\in\Ri$ be such that $f(m)=u_e$. Then, for any short edge $e'$ parallel to $e$, there exists $m'\in\Ri$ such that $f(m')=u_{e'}$.
\end{cor}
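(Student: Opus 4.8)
The plan is to obtain the corollary as an immediate consequence of Lemma~\ref{Lemme} together with the commutativity of \Rh. First I would note that, by the way it was set up, the relation ``$e$ is parallel to $e'$'' between short edges of $\mathcal{S}$ is symmetric: it only requires the existence of a long edge $E\in\mathcal{L}$ to which $e$ and $e'$ are both parallel (or both antiparallel). Hence Lemma~\ref{Lemme} applies to the ordered pair $(e',e)$ just as well as to $(e,e')$, and, with the notation introduced after its proof, it produces a word $m_{e'\rightarrow e}\in\Ri$ with $f(m_{e'\rightarrow e})=u_{e'}u_e^{-1}$.

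Next I would simply set $m'=m_{e'\rightarrow e}\cdot m\in\Ri$. Since $f$ is a ring homomorphism and \Rh\ is commutative,
\[
	f(m')=f(m_{e'\rightarrow e})\,f(m)=\bigl(u_{e'}u_e^{-1}\bigr)u_e=u_{e'},
\]
which is exactly the assertion. Equivalently, one can take $m'=m_{e\rightarrow e'}^{-1}\cdot m$, using that $m_{e\rightarrow e'}$, being a product of shape parameters, is a unit in \Ri\ (around any tetrahedron vertex $z_1z_2z_3=-1$, so $z_1^{-1}=-z_2z_3$ is regular) and that therefore $f(m_{e\rightarrow e'}^{-1})=f(m_{e\rightarrow e'})^{-1}$.

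There is no real obstacle here: all the geometric content has already been carried out in Lemma~\ref{Lemme}, where one travels on the disk of triangular faces around an end point of $E$ from $\check{e}$ to $\check{e'}$ and reads off a telescoping product of the $f(z_i)$'s whose independence of the chosen path comes from the gluing equations in \Ri. The only thing this corollary adds is the bookkeeping observation that multiplying the known preimage $m$ of $u_e$ by $m_{e'\rightarrow e}$ transports it to a preimage of $u_{e'}$. Its purpose in the sequel is precisely to let one \emph{propagate} the property ``$u_e$ lies in the image of $f$'' along short edges parallel to a given one, which is one of the elementary moves used, under the hypothesis $H_1(M,\mathbb{Z})=0$, in the proof of surjectivity of $f$.
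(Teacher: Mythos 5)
Your proof is correct and is essentially the argument the paper intends (the corollary is stated without proof, being an immediate consequence of Lemma~\ref{Lemme}): apply the lemma to the pair $(e',e)$ to get $m_{e'\rightarrow e}$ with $f(m_{e'\rightarrow e})=u_{e'}u_e^{-1}$ and multiply by $m$. Nothing is missing.
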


Being a cell complex decomposition of $M$, \tauh\ induces a presentation of the first homology group with 1-cells as generators, and 2-cells as relations. Let us denote by \distE\ the distinguished edge corresponding to the knot in $M$.
Let us also denote by $E_i^H$ for $1\leqslant i \leqslant n$ all the other edges in $\trigdimH{1} / \recolh$ and $E_i^{j,H}$ the edges of  $\trigdimH{1}$ such that $\projh(E_i^{j,H})=E_i^H$.
Let us denote by $\nchain{l}(M)$ the group of oriented $l$-chains (the free abelian group with basis the oriented open $l$-simplices), with boundary maps: $\bound{l}:\nchain{l}(M)\rightarrow\nchain{l-1}(M)$.
Let us denote by \distF\ the distinguished face of $\trigdimH{2} / \recolh$, and $F_i^H$ for $1\leqslant i \leqslant m$ the other faces. Then $\bound{2}(\distF)=\epsilon_{\distE}\distE+\epsilon_j E_j^H+\epsilon_{j'} E_{j'}^H$ ($\epsilon_i=\pm 1$).
Besides, according to the particular H-triangulations considered here, $\distF$ is the only face of \trigdimH{2} / \recolh\ whose boundary contains $\distE$. Thus, one can consider another presentation of the first homology group with generators the edges in $\trigdimH{1} / \recolh$ without the distinguished one, and relations corresponding to non-distinguished faces. On what follows, we assume that $K$ and \distF\ are removed from the corresponding sets of cells.

Let us assume from now on that $H_1(M,\mathbb{Z})=0$ and define some useful notations.
Let \anyEH\ be an edge in $\trigdimH{1} / \recolh$. Then, there exists a map $\signe{\anyEH}: \trigdimH{2} / \recolh \rightarrow \mathbb{Z}$ such that $$\anyEH=\sum_{F\in\trigdimH{2}/ \recolh}\signe{\anyEH}(F)\bound{2}(F)\in \nchain{1}(M).$$ It is a tautological equality in a free abelian group.

For any $F\in\mathcal{\tilde{H}}$ let
$$\mathcal{\tilde{E}}_F=\{E\in\mathcal{\tilde{L}} \mid E \text{ is on the boundary of }F\}$$
and let us define a set 
$$X=\{(F,E) \mid F\in\mathcal{\tilde{H}}, E \in\mathcal{\tilde{E}}_F \} $$
and a map $r:X\rightarrow\mathcal{\tilde{S}}$ in the following way. $E$ is on the boundary of two faces in $\mathcal{\tilde{H}}$, $F$ and $F'$, so $r(F,E)$ is the short edge on the boundary of $F'$ disjoint from $E$ and parallel to it (see figure \ref{def de r}).

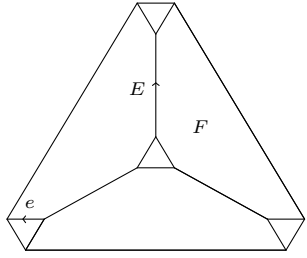
\begin{figure}
	\begin{center}
\begin{tikzpicture}[line cap=round,line join=round,>=triangle 45,x=0.4892367906066535cm,y=0.4726890756302519cm]
\clip(-0.98,-6.56) rectangle (8.11,1.41);
\draw (3.5,0.06)-- (3,0.93);
\draw (0,-6)-- (0.5,-5.13);
\draw (0.5,-5.13)-- (-0.5,-5.13);
\draw (-0.07,-5.13) -- (0,-5.04);
\draw (-0.07,-5.13) -- (0,-5.23);
\draw (3.5,-2.82)-- (3,-3.69);
\draw (3.5,0.06)-- (4,0.93);
\draw (4,0.93)-- (7.5,-5.13);
\draw (7.5,-5.13)-- (6.5,-5.13);
\draw (6.5,-5.13)-- (4,-3.69);
\draw (4,-3.69)-- (3.5,-2.82);
\draw (7,-6)-- (6.5,-5.13);
\draw (6.5,-5.13)-- (4,-3.69);
\draw (4,-3.69)-- (3,-3.69);
\draw (3,-3.69)-- (0.5,-5.13);
\draw (0.5,-5.13)-- (0,-6);
\draw (0,-6)-- (7,-6);
\draw (7,-6)-- (7.5,-5.13);
\draw (7.5,-5.13)-- (4,0.93);
\draw (4,0.93)-- (3,0.93);
\draw (3,0.93)-- (-0.5,-5.13);
\draw (-0.5,-5.13)-- (0,-6);
\draw (0,-6)-- (7,-6);
\draw (3.5,-2.82)-- (3.5,0.06);
\draw (3.5,-1.31) -- (3.59,-1.38);
\draw (3.5,-1.31) -- (3.41,-1.38);
\begin{scriptsize}
\draw[color=black] (0.12,-4.7) node {$e$};
\draw[color=black] (4.7,-2.53) node {$F$};
\draw[color=black] (3,-1.48) node {$E$};
\end{scriptsize}
\end{tikzpicture}
\caption{Definition of $r: e=r(F,E)$}\label{def de r}
\end{center}
\end{figure}

We fix a section $\sigma: \hex \rightarrow \mathcal{\tilde{H}}$ with the image disjoint with the distinguished tetraedron. Denote $\tilde{\settaut{\anyEH}}=\sigma(\hex)$ and reinterpret $\signe{\anyEH}$ as a map from $\tilde{\settaut{\anyEH}}$ to $\mathbb{Z}$.

We define 

$$U_{\anyEH}=\prod_{F\in\tilde{\settaut{\anyEH}}}\prod_{E\in\mathcal{\tilde{E}}_F}(u_{r(F,E)})^{\signe{\anyEH}(F)}$$
% and remark that $U_{\anyEH}=1$ since it is a product of triangular face relations.

Let us also define a map $s:\mathcal{L}\rightarrow\mathcal{S}$ which associates arbitrarily, to each long edge, a parallel short edge, and
$$U_{\anyEH}^s=\prod_{F\in\tilde{\settaut{\anyEH}}}\prod_{E\in\mathcal{\tilde{E}}_F}(u_{s\circ\projh(E)})^{\signe{\anyEH}(F)}$$
and remark that $U_{\anyEH}^s=u_{s(A)}$; it is a tautological equality in the free abelian group generated by the symbols $u_i$ associated to short edges. Then, for each $\anyEH\in\trigdimH{1} / \recolh$, one can write the equality
$$u_{s(\anyEH)}=\dfrac{U_{\anyEH}^s}{U_A}=\prod_{F\in\tilde{\settaut{\anyEH}}}\prod_{E\in\mathcal{\tilde{E}}_F}\left(\frac{u_{s\circ\projh(E)}}{u_{r(F,E)}}\right)^{\signe{\anyEH}(F)}.$$
By definitions of $r$ and $s$ and thanks to lemma \ref{Lemme}, there exist words $m_{F,E}\in\Ri$ such that $\frac{u_{s\circ\projh(E)}}{u_{r(F,E)}}=f(m_{F,E})$ and thus there exists a word $m_{s(\anyEH)}$ in \Ri\ such that $f(m_{s(\anyEH)})=u_{s(\anyEH)}$.

According to corollary \ref{corollary}, for any short edge $e_i$ parallel to $s(A)$ and with the generators of \Rh\ $(u_i,v_i)$, there exists a word $m_i\in\Rh$ such that $f(m_i)=u_i$. Besides, for any short edge $e_j$, there exists a short edge $e_i$ such that $v_j=u_i$ or $v_j=u_i^{-1}$.
Thus $f$ is surjective, and this almost allows one to define a ring homomorphism $g:\Rh\rightarrow \Ri$: keeping the previous notation, $g(u_i)=g(v_j)=m_i$. 

One still has \ref{fin enum} things to check:

\begin{enumerate}
\item The images of the $u_i$ do not depend on the choice of $\signe{\anyEH}$:

Let $\signeb{\anyEH}{(1)}$ and $\signeb{\anyEH}{(2)}$ be two maps such that 
$$\anyEH=\sum_F\signeb{\anyEH}{(1)}(F)F=\sum_F\signeb{\anyEH}{(2)}(F)F\in \nchain{1}(M).$$
As $H_2(M,\mathbb{Z})=0$, $\phi=\sum_F\signeb{\anyEH}{(1)}(F)F-\sum_F\signeb{\anyEH}{(2)}(F)F=\bound{3}(\tau)$, where $\tau$ is a linear combination of tetrahedra in \tauh.
If $\tau$ contains the distinguished tetrahedron, then there is an ambiguity in the word $m_{s(\anyEH)}$ given by a power of the product of all shape parameters associated to the edge opposite to the distinguished edge in the distinguished tetraedron, which is $1$ thanks to the gluing equations.
Let us now assume that $\tau$ contains only tetrahedra different from the distinguished one.
Let us denote by $m_{s(\anyEH)}^{(1)}$ and $m_{s(\anyEH)}^{(2)}$ the corresponding words for $\signe{\anyEH}{(1)}$ and $\signe{\anyEH}{(2)}$. Then $\frac{m_{s(\anyEH)}^{(1)}}{m_{s(\anyEH)}^{(2)}}$ is the product of all shape parameters of all tetrahedra contained in $\tau$, so it is equal to $1$.

\item The definition of $U_{\anyEH}$ does not depend on the choice of $\tilde{\settaut{\anyEH}}$ \emph{i.e.} the section $\sigma:\hex\rightarrow\tilde{\hex}$:

Let $F\in\hex$ and $\{F^1, F^2\}=\projh^{-1}(F)$. If one of the $F^i$'s is on the boundary of the distinguished tetrahedron, then there is no problem thanks to the definition of $\sigma$. Else, let $E^i_k, 1\leqslant k \leqslant 3, 1\leqslant i \leqslant 2$ be the long edges in the boundary of $F^i$, with the corresponding shape parameters $z^i_k$. Let $e^i_k=r(F^i, E^i_k)$.
It is then easy to see that for each $k$, $f(z^1_kz^2_k)=u^1_k(u^2_k)^{-1}$, then the ambiguity of $m_{s(A)}$ is given by a power of $\prod_{k=1}^3z^1_kz^2_k=1\in\Ri$.

\item The definition of $m_i$ does not depend on the choice of $s$:

Let $A\in\mathcal{L}$, $e_1, e_2\in\mathcal{S}$ with parameters $(u_1,v_1)$, $(u_2, v_2)$, and $s_1, s_2:\mathcal{L}\rightarrow\mathcal{S}$ such that $s_1(A)=e_1$, $s_2(A)=e_2$ and for any $E\in\mathcal{L}, E\neq A$, $s_1(E)=s_2(E)$. Then, from the definition of $g$, one can write $g_{s_1}(u_1)=m_{s_1(A)}$, $g_{s_2}(u_2)=m_{s_2(A)}$ and $g_{s_2}(u_1)=m_{e_1\rightarrow e_2}m_{s_2(A)}$.

The sum of the powers of $u_i$ in the definition of $U_A^{s_i}$ is 1, so $U_A^{s_1}=\frac{u_1}{u_2}U_A^{s_2}$ and
\begin{align*}
	g_{s_1}(u_1)&=m_{s_1(A)}\\&=m_{e_1\rightarrow e_2}m_{s_2(A)}\\&=g_{s_2}(u_1).
\end{align*}
The same reasonning works for $g_{s_1}(u_2)=g_{s_2}(u_2)$.

For $E\neq A$, the sum of the powers of $u_i$ in the definition of $U_E^{s_i}$ is 0, so changing $s$ has no influence on the definition of $g(u)$ for the short edges parallel with E.

\item Let us now prove that $g$ respects the relations in \Rh.

The hexagonal face relations are preserved by definition of the image of $v_i$.

Until now, orientation of long edges, hence the sign, was implied in the notation, and short edges were considered parallel with long edges. Reversing orientation of a short edge naturally inverses the image of its generator $u$ by $g$.

Let us check now, that reversing orientation preserves the relation for the parameter $v$.

Let $e_1, e_2, e_3$ be three short edges on the boundary of a face $F\in\mathcal{\tilde{H}}$, respectively parallel to long edges
 $E_1, E_2, E_3$ (also on the boundary of $F$) and oriented such that the starting point of $e_1$ and $e_3$ bound $E_2$, and the end point of $e_1$ and $e_2$ bound $E_3$.
Then, by definition of $g$, $g(v_2)=g(u_1)$. Let us reverse orientation of $e_2$ and let us call it $\bar{e_2}$, then $g(v_{\bar{2}})=g(u_3)$, and one has to check that $g(u_3)=-g(u_2)^{-1}g(u_1)$. Let us do this from the definition of $g(u_3)$ and let us assume without loss of generality, that $e_3=s(E_3)$. Let us write $\bound{2}(F)=E_2+E_3-E_1\in\nchain{1}(M)$, so 
\begin{align*}
	E_3&=\bound{2}(F)+E_1-E_2\\&=\bound{2}\left(F+\sum_{\phi}\signe{E_1}(\phi) \phi-\sum_{\phi}\signe{E_2}(\phi) \phi\right).
\end{align*}

Let us assume, as a first case, that $\projh(E_1), \projh(E_2), \projh(E_3)$ are three different edges and that $e_1=s(E_1)$ and $ e_2=s(E_2)$. Let us denote $u_i'=r(F,E_i)$. Then, one has the following tautological equality:
$$u_3=\frac{u_3 u_1' u_2}{u_3' u_1 u_2'}\frac{U_1^s}{U_1}\frac{U_2}{U_2^s}.$$ 
$\frac{u_3 u_1' u_2}{u_3' u_1 u_2'}$ has as preimage in $\Ri$ a product of three different shape parameters of a tetrahedron (which is equal to -1), $\frac{U_1^s}
{U_1}$ has as preimage in \Ri\ a word $m_1$ from lemma \ref{Lemme} such that $f(m_1)=u_1$ and $\frac{U_2^s}{U_2}$ has as preimage in \Ri\ a word $m_2$ from lemma \ref{Lemme} such that $f(m_2)=u_2$. So, by construction of $g$, $g(u_3)=-g(u_1)g(u_2)^{-1}$.
Let us now assume  that another edge of the boundary of $F$ has projection equal to $E_3$ in $\trigdimH{1} / \recolh$, for example $E_1$, with $s(E_1)=s(E_3)=e_3$. Then, one has the following tautological equality: 
$u_3=\frac{u_3 u_1' u_2}{u_3' u_3 u_2'}\frac{U_1^s}{U_1}\frac{U_2}{U_2^s}$, but now, $\frac{U_1^s}{U_1}=u_3$ and
$\frac{u_3 u_1' u_2}{u_3' u_3 u_2'}=\frac{u_3 u_1' u_2}{u_3' u_1 u_2'}\frac{u_1}{u_3}$ which has preimage in \Ri\, the product of three different shape parameters of a tetrahedron and the word of corollary \ref{corollary} going from $e_3$ to $e_1$.

Let us end with the triangular face relations. Let $e_i, e_j, e_k$ be short edges bounding a triangular face in $\tauhtron$, with generators $(u_i,v_i),(u_j,v_j),(u_k,v_k)$. Let $F$ be the opposite hexagonal face, with long edges $E_i,E_j,E_k$ on its boundary.
Let us assume they are oriented so that $\bound{2}(F)=E_i+E_j-E_k$, and one has to prove that $g(u_i)g(u_j)g(u_k)^{ -1}=1$: the reasoning is similar to the previous one. One also has to prove that 
$$-g(v_k)+g(u_i)g(v_j)+g(v_i)=0,$$ 
or equivalently, 
$$g(u_i)g(v_j)g(v_k)^{-1}+g(v_i)g(v_k)^{-1}=1,$$
so, by definition of $g$, 
$$g(u_i)g(u_{\check{j}})g(u_{\check{k}})^{-1}+g(u_{\check{i}})g(u_{\check{k}})^{-1}=1.$$
Let us call $e_l$, the short edge in the same tetrahedron parallel with $e_i$, then it is equivalent to prove that $$g(u_i)g(u_l)^{-1}g(u_l)g(u_{\check{j}})g(u_{\check{k}})^{-1}+g(u_{\check{i}})g(u_{\check{k}})^{-1}=1.$$
The left part of the equality is equal, by definition of $g$ and thanks to the preceding relation, to $-z_kz_j+z_j=1$.
\label{fin enum}

\end{enumerate}

It is now a straightforward computation to check that $f\circ g =id_{\Ri}$ and $g\circ f=id_{\Rh}$.

\begin{remark}
	We actually proved that for any edge $E$ of $\nchain{1}(M)$ of which the projection onto $H_1(M)$ is trivial, the parameters $u_i$ associated to short edges parallel to $E$ are in the image of $f$.
\end{remark}

\section{Examples} \label{examples}

\subsection{Figure-eight knot}

Let $K$ be the figure-eight knot embedded in $\mathbb{S}^3$. The cell complex of figure \ref{figure eight knot} is a truncated particular H-triangulation corresponding to the well known ideal triangulation of $\mathbb{S}^3\setminus K$, see [3].
The distinguished edge is dotted. The edges labelled with double and triple arrows are identified when collapsing that edge onto the vertex before truncation. The shape parameters $z_i$ and $w_i$ are actually associated to the corresponding edges of the ideal triangulation. 

Thurston's gluing equations are $z_1^2z_2w_1w_3^2=1$ and $z_2z_3^2w_1w_2^2=1$, or, equivalently, $z_1(1-z_1)w_3(1-w_3)=1$.

Keeping the previous notations, one has, for example, $f(z_1)=v_2v_{11}^{-1}=u_{7}u_{16}^{-1}$. According to theorem \ref{theorem}, $f$ is an isomorphism. Let us see, for example, what the preimage of $u_1$ is. $e_1$ is parallel to the edge labelled with a simple arrow. That edge is tautologically equal to $\bound{2}(B)\in\nchain{1}(\mathbb{S}^3)$.
Let us consider the only triangular face opposite to the hexagonal face corresponding to $B$ which is not in the distinguished tetrahedron. Let us assume that the image by $s$ of the long edge labelled with a double arrow is $e_{15}$, and $e_1$ for the long one labelled with a simple arrow. Then, one has the following equality : $u_1=\frac{u_1 u_{15} u_{15}^{-1}}{u_{14} u_{15} u_{16}^{-1}}$.
Besides, following the proof of lemma \ref{Lemme}, one has $f(w_2z_2w_2z_3w_1)=\frac{u_1}{u_{14}}$ and, by definition of $f$ and thanks to the hexagonal face relations in \Rh,  $f(w_3)=\frac{u_{16}}{u_{15}}$. Then, using the gluing equations in \Ri, we find that $f(z_2z_3w_1w_{2}^{2}w_3)=u_1$.

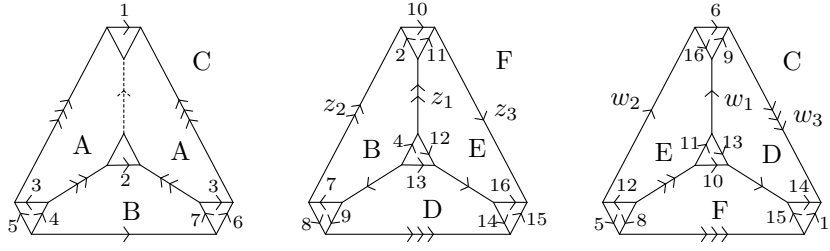
\begin{figure}
	\begin{center}
\begin{tikzpicture}[line cap=round,line join=round,>=triangle 45,x=0.2761812170441546cm,y=0.3046495887829334cm]
\clip(-0.8,-7.48) rectangle (39.02,4.33);
\draw (0.6,-6)-- (9.4,-6);
\draw (5.25,-6) -- (5,-6.32);
\draw (5.25,-6) -- (5,-5.68);
\draw (4.2,-3)-- (5.77,-2.98);
\draw (5.24,-2.99) -- (4.99,-3.31);
\draw (5.24,-2.99) -- (4.99,-2.67);
\draw (4.2,3.01)-- (5.8,3.01);
\draw (5.25,3.01) -- (5,2.69);
\draw (5.25,3.01) -- (5,3.32);
\draw (18.23,-2.98)-- (15.4,-4.61);
\draw (16.6,-3.92) -- (16.65,-3.52);
\draw (16.6,-3.92) -- (16.97,-4.07);
\draw (19.8,-3)-- (22.6,-4.61);
\draw (21.41,-3.93) -- (21.04,-4.08);
\draw (21.41,-3.93) -- (21.36,-3.53);
\draw (22.6,-4.61)-- (23.4,-6);
\draw (23.12,-5.52) -- (22.72,-5.47);
\draw (23.12,-5.52) -- (23.28,-5.15);
\draw (19,1.62)-- (18.2,3.01);
\draw (18.48,2.53) -- (18.88,2.47);
\draw (18.48,2.53) -- (18.32,2.15);
\draw (15.4,-4.61)-- (14.6,-6);
\draw (14.88,-5.52) -- (14.72,-5.15);
\draw (14.88,-5.52) -- (15.28,-5.47);
\draw (19,1.62)-- (19.8,3.01);
\draw (19.52,2.53) -- (19.68,2.15);
\draw (19.52,2.53) -- (19.12,2.47);
\draw (33,-1.62)-- (33,1.62);
\draw (33,0.25) -- (33.32,0);
\draw (33,0.25) -- (32.68,0);
\draw (29.4,-4.61)-- (27.8,-4.61);
\draw (28.35,-4.61) -- (28.6,-4.3);
\draw (28.35,-4.61) -- (28.6,-4.93);
\draw (36.6,-4.61)-- (38.2,-4.61);
\draw (37.65,-4.61) -- (37.4,-4.93);
\draw (37.65,-4.61) -- (37.4,-4.3);
\draw (27.8,-4.61)-- (32.2,3.01);
\draw (30.12,-0.59) -- (30.28,-0.96);
\draw (30.12,-0.59) -- (29.72,-0.64);
\draw (32.2,-3)-- (33,-1.62);
\draw (32.73,-2.09) -- (32.88,-2.46);
\draw (32.73,-2.09) -- (32.33,-2.15);
\draw (37.4,-6)-- (38.2,-4.61);
\draw (37.92,-5.09) -- (38.08,-5.47);
\draw (37.92,-5.09) -- (37.52,-5.15);
\draw (33.77,-2.98)-- (36.6,-4.61);
\draw (35.4,-3.92) -- (35.03,-4.07);
\draw (35.4,-3.92) -- (35.35,-3.52);
\draw (29.4,-4.61)-- (28.6,-6);
\draw (28.88,-5.52) -- (28.72,-5.15);
\draw (28.88,-5.52) -- (29.28,-5.47);
\draw (33,1.62)-- (33.8,3.01);
\draw (33.52,2.53) -- (33.68,2.15);
\draw (33.52,2.53) -- (33.12,2.47);
\draw (1.4,-4.61)-- (4.2,-3);
\draw (3.23,-3.56) -- (3.17,-3.96);
\draw (3.23,-3.56) -- (2.86,-3.41);
\draw (2.8,-3.81) -- (2.75,-4.2);
\draw (2.8,-3.81) -- (2.43,-3.65);
\draw (8.6,-4.61)-- (5.77,-2.98);
\draw (6.76,-3.55) -- (7.13,-3.4);
\draw (6.76,-3.55) -- (6.81,-3.95);
\draw (7.19,-3.8) -- (7.56,-3.65);
\draw (7.19,-3.8) -- (7.24,-4.2);
\draw (13.8,-4.61)-- (18.2,3.01);
\draw (16.25,-0.38) -- (16.4,-0.75);
\draw (16.25,-0.38) -- (15.85,-0.43);
\draw (16,-0.8) -- (16.15,-1.18);
\draw (16,-0.8) -- (15.6,-0.86);
\draw (19,-1.62)-- (19,1.62);
\draw (19,0.5) -- (19.32,0.25);
\draw (19,0.5) -- (18.68,0.25);
\draw (19,0) -- (19.32,-0.24);
\draw (19,0) -- (18.68,-0.24);
\draw (29.4,-4.61)-- (32.2,-3);
\draw (31.23,-3.56) -- (31.17,-3.96);
\draw (31.23,-3.56) -- (30.86,-3.41);
\draw (30.8,-3.81) -- (30.75,-4.2);
\draw (30.8,-3.81) -- (30.43,-3.65);
\draw (9.4,-6)-- (8.6,-4.61);
\draw (8.88,-5.09) -- (9.28,-5.15);
\draw (8.88,-5.09) -- (8.72,-5.47);
\draw (4.2,3.01)-- (5,1.62);
\draw (0.6,-6)-- (1.4,-4.61);
\draw (1.12,-5.09) -- (1.28,-5.47);
\draw (1.12,-5.09) -- (0.72,-5.15);
\draw (5.8,3.01)-- (5,1.62);
\draw (15.4,-4.61)-- (13.8,-4.61);
\draw (14.35,-4.61) -- (14.6,-4.3);
\draw (14.35,-4.61) -- (14.6,-4.93);
\draw (22.6,-4.61)-- (24.2,-4.61);
\draw (23.65,-4.61) -- (23.4,-4.93);
\draw (23.65,-4.61) -- (23.4,-4.3);
\draw (18.23,-2.98)-- (19,-1.62);
\draw (18.73,-2.08) -- (18.89,-2.46);
\draw (18.73,-2.08) -- (18.34,-2.14);
\draw (23.4,-6)-- (24.2,-4.61);
\draw (23.92,-5.09) -- (24.08,-5.47);
\draw (23.92,-5.09) -- (23.52,-5.15);
\draw (37.4,-6)-- (36.6,-4.61);
\draw (36.88,-5.09) -- (37.28,-5.15);
\draw (36.88,-5.09) -- (36.72,-5.47);
\draw (32.2,3.01)-- (33,1.62);
\draw (32.72,2.1) -- (32.32,2.15);
\draw (32.72,2.1) -- (32.88,2.47);
\draw (-0.2,-4.61)-- (4.2,3.01);
\draw (2.12,-0.59) -- (2.28,-0.96);
\draw (2.12,-0.59) -- (1.72,-0.64);
\draw (1.88,-1.02) -- (2.03,-1.39);
\draw (1.88,-1.02) -- (1.48,-1.07);
\draw (2.37,-0.16) -- (2.52,-0.53);
\draw (2.37,-0.16) -- (1.97,-0.22);
\draw (4.2,-3)-- (5,-1.62);
\draw (9.4,-6)-- (10.2,-4.61);
\draw (9.92,-5.09) -- (10.08,-5.47);
\draw (9.92,-5.09) -- (9.52,-5.15);
\draw (10.2,-4.61)-- (5.8,3.01);
\draw (7.88,-0.59) -- (8.28,-0.64);
\draw (7.88,-0.59) -- (7.72,-0.96);
\draw (8.12,-1.02) -- (8.52,-1.07);
\draw (8.12,-1.02) -- (7.97,-1.39);
\draw (7.63,-0.16) -- (8.03,-0.22);
\draw (7.63,-0.16) -- (7.48,-0.53);
\draw (5.77,-2.98)-- (5,-1.62);
\draw (0.6,-6)-- (-0.2,-4.61);
\draw (0.08,-5.09) -- (0.48,-5.15);
\draw (0.08,-5.09) -- (-0.08,-5.47);
\draw (14.6,-6)-- (23.4,-6);
\draw (19.25,-6) -- (19,-6.32);
\draw (19.25,-6) -- (19,-5.68);
\draw (18.75,-6) -- (18.51,-6.32);
\draw (18.75,-6) -- (18.51,-5.68);
\draw (19.74,-6) -- (19.49,-6.32);
\draw (19.74,-6) -- (19.49,-5.68);
\draw (18.23,-2.98)-- (19.8,-3);
\draw (19.26,-2.99) -- (19.01,-3.31);
\draw (19.26,-2.99) -- (19.01,-2.67);
\draw (18.2,3.01)-- (19.8,3.01);
\draw (19.25,3.01) -- (19,2.69);
\draw (19.25,3.01) -- (19,3.32);
\draw (28.6,-6)-- (37.4,-6);
\draw (33.25,-6) -- (33,-6.32);
\draw (33.25,-6) -- (33,-5.68);
\draw (32.75,-6) -- (32.51,-6.32);
\draw (32.75,-6) -- (32.51,-5.68);
\draw (33.74,-6) -- (33.49,-6.32);
\draw (33.74,-6) -- (33.49,-5.68);
\draw (32.2,-3)-- (33.77,-2.98);
\draw (33.24,-2.99) -- (32.99,-3.31);
\draw (33.24,-2.99) -- (32.99,-2.67);
\draw (32.2,3.01)-- (33.8,3.01);
\draw (33.25,3.01) -- (33,2.69);
\draw (33.25,3.01) -- (33,3.32);
\draw (33.8,3.01)-- (38.2,-4.61);
\draw (36.12,-1.02) -- (35.72,-0.96);
\draw (36.12,-1.02) -- (36.28,-0.64);
\draw (35.88,-0.59) -- (35.48,-0.53);
\draw (35.88,-0.59) -- (36.03,-0.22);
\draw (36.37,-1.45) -- (35.97,-1.39);
\draw (36.37,-1.45) -- (36.52,-1.07);
\draw (33,-1.62)-- (33.77,-2.98);
\draw (33.51,-2.51) -- (33.11,-2.46);
\draw (33.51,-2.51) -- (33.66,-2.14);
\draw (27.8,-4.61)-- (28.6,-6);
\draw (28.32,-5.52) -- (27.92,-5.47);
\draw (28.32,-5.52) -- (28.48,-5.15);
\draw (19.8,3.01)-- (24.2,-4.61);
\draw (22.12,-1.02) -- (21.72,-0.96);
\draw (22.12,-1.02) -- (22.28,-0.64);
\draw (19,-1.62)-- (19.8,-3);
\draw (19.52,-2.52) -- (19.12,-2.46);
\draw (19.52,-2.52) -- (19.67,-2.15);
\draw (13.8,-4.61)-- (14.6,-6);
\draw (14.32,-5.52) -- (13.92,-5.47);
\draw (14.32,-5.52) -- (14.48,-5.15);
\draw [dash pattern=on 1pt off 1pt] (5,-1.62)-- (5,1.62);
\draw [dash pattern=on 1pt off 1pt] (5,0.25) -- (5.32,0);
\draw [dash pattern=on 1pt off 1pt] (5,0.25) -- (4.68,0);
\draw (1.4,-4.61)-- (-0.2,-4.61);
\draw (0.35,-4.61) -- (0.6,-4.3);
\draw (0.35,-4.61) -- (0.6,-4.93);
\draw (8.6,-4.61)-- (10.2,-4.61);
\draw (9.65,-4.61) -- (9.4,-4.93);
\draw (9.65,-4.61) -- (9.4,-4.3);
\draw (19.16,0.57) node[anchor=north west] {$z_1$};
\draw (22.17,0.05) node[anchor=north west] {$z_3$};
\draw (14.03,0.28) node[anchor=north west] {$z_2$};
\draw (33.14,0.47) node[anchor=north west] {$w_1$};
\draw (36.43,-0.23) node[anchor=north west] {$w_3$};
\draw (27.73,0.52) node[anchor=north west] {$w_2$};
\draw (2.11,-1.32) node[anchor=north west] {A};
\draw (6.78,-1.55) node[anchor=north west] {A};
\draw (4.47,-4.19) node[anchor=north west] {B};
\draw (7.81,2.36) node[anchor=north west] {C};
\draw (15.91,-1.5) node[anchor=north west] {B};
\draw (35.92,2.36) node[anchor=north west] {C};
\draw (18.78,-4.09) node[anchor=north west] {D};
\draw (34.93,-1.69) node[anchor=north west] {D};
\draw (20.95,-1.46) node[anchor=north west] {E};
\draw (29.84,-1.6) node[anchor=north west] {E};
\draw (22.26,2.4) node[anchor=north west] {F};
\draw (32.53,-4.14) node[anchor=north west] {F};
\begin{scriptsize}
\draw[color=black] (5.08,-3.62) node {2};
\draw[color=black] (5.03,3.77) node {1};
\draw[color=black] (22.31,-5.41) node {14};
\draw[color=black] (18.17,1.79) node {2};
\draw[color=black] (15.58,-5.22) node {9};
\draw[color=black] (19.91,1.79) node {11};
\draw[color=black] (28.95,-3.86) node {12};
\draw[color=black] (37.14,-3.91) node {14};
\draw[color=black] (31.96,-2.02) node {11};
\draw[color=black] (38.37,-5.51) node {1};
\draw[color=black] (29.66,-5.36) node {8};
\draw[color=black] (33.8,1.7) node {9};
\draw[color=black] (8.42,-5.36) node {7};
\draw[color=black] (1.69,-5.32) node {4};
\draw[color=black] (14.87,-3.81) node {7};
\draw[color=black] (22.92,-3.86) node {16};
\draw[color=black] (18.07,-1.93) node {4};
\draw[color=black] (24.67,-5.32) node {15};
\draw[color=black] (36.06,-5.27) node {15};
\draw[color=black] (32.15,1.79) node {16};
\draw[color=black] (10.45,-5.46) node {6};
\draw[color=black] (-0.29,-5.65) node {5};
\draw[color=black] (18.92,-3.72) node {13};
\draw[color=black] (18.97,3.82) node {10};
\draw[color=black] (33.05,-3.67) node {10};
\draw[color=black] (33.19,3.77) node {6};
\draw[color=black] (33.99,-1.98) node {13};
\draw[color=black] (27.63,-5.55) node {5};
\draw[color=black] (20.05,-1.83) node {12};
\draw[color=black] (13.7,-5.55) node {8};
\draw[color=black] (0.84,-3.86) node {3};
\draw[color=black] (9.27,-3.91) node {3};
\end{scriptsize}
\end{tikzpicture}
\caption{Figure eight knot in $\mathbb{S}^3$ \label{figure eight knot}}
\end{center}
\end{figure}

\subsection{A knot in $\mathbb{S}^2\times\mathbb{S}^1$}

Figure \ref{essedeuxcroixessun} represents a truncated particular H-triangulation of a knot in $\mathbb{S}^2\times\mathbb{S}^1$. The knot is represented by the dotted edge. Here, $M=\mathbb{S}^2\times\mathbb{S}^1$, so $H_1(M,\mathbb{Z})\simeq H_2(M,\mathbb{Z})\simeq \mathbb{Z}$. 

The shape parameters $z$ and $w$ are associated to the corresponding edges in \taui. The Thurston's  gluing equation is $zw=1$. After computation, the presentation of \Rh\ reduces to $\mathbb{Z}\langle u_1,u_{\bar{1}},u_2 \rvert u_1u_{\bar{1}}=1,u_2^3=1\rangle$

$f$ is neither surjective, nor injective. Indeed, $u_1\notin\text{Im}(f)$, and $f(w)=u_2$, so $f(w^3)=1$.

\begin{figure}
	\begin{center}
\begin{tikzpicture}[line cap=round,line join=round,>=triangle 45,x=0.595370899485691cm,y=0.7110996878507736cm]
\clip(-6,2.46) rectangle (12.47,7.38);
\draw (-2.75,6.9)-- (-3.75,6.9);
\draw (-3.38,6.9) -- (-3.25,7.07);
\draw (-3.38,6.9) -- (-3.25,6.74);
\draw (-3.25,6.04)-- (-3.75,6.9);
\draw (-3.56,6.58) -- (-3.35,6.55);
\draw (-3.56,6.58) -- (-3.64,6.39);
\draw (-3.25,6.04)-- (-2.75,6.9);
\draw (-2.93,6.58) -- (-2.85,6.39);
\draw (-2.93,6.58) -- (-3.14,6.55);
\draw (-5.62,3.65)-- (-5.12,2.78);
\draw (-5.31,3.1) -- (-5.51,3.13);
\draw (-5.31,3.1) -- (-5.23,3.3);
\draw (-5.62,3.65)-- (-4.62,3.65);
\draw (-4.99,3.65) -- (-5.12,3.48);
\draw (-4.99,3.65) -- (-5.12,3.81);
\draw (-4.62,3.65)-- (-5.12,2.78);
\draw (-4.93,3.1) -- (-5.01,3.3);
\draw (-4.93,3.1) -- (-4.73,3.13);
\draw (-0.86,3.65)-- (-1.36,2.79);
\draw (-1.18,3.11) -- (-1.26,3.3);
\draw (-1.18,3.11) -- (-0.97,3.14);
\draw (-1.86,3.65)-- (-1.36,2.79);
\draw (-1.55,3.11) -- (-1.76,3.14);
\draw (-1.55,3.11) -- (-1.47,3.3);
\draw (-3.24,5.03)-- (-3.75,4.15);
\draw (-3.56,4.48) -- (-3.64,4.67);
\draw (-3.56,4.48) -- (-3.35,4.51);
\draw (-2.74,4.16)-- (-3.75,4.15);
\draw (-3.37,4.16) -- (-3.24,4.32);
\draw (-3.37,4.16) -- (-3.24,3.99);
\draw (-3.24,5.03)-- (-2.74,4.16);
\draw (-2.93,4.48) -- (-3.14,4.51);
\draw (-2.93,4.48) -- (-2.85,4.67);
\draw (-3.75,6.9)-- (-5.62,3.65);
\draw (-4.75,5.16) -- (-4.83,5.36);
\draw (-4.75,5.16) -- (-4.54,5.19);
\draw (-2.75,6.9)-- (-0.86,3.65);
\draw (-1.74,5.17) -- (-1.95,5.19);
\draw (-1.74,5.17) -- (-1.66,5.36);
\draw [dash pattern=on 1pt off 1pt] (-3.25,6.04)-- (-3.24,5.03);
\draw [dash pattern=on 1pt off 1pt] (-3.24,5.4) -- (-3.41,5.53);
\draw [dash pattern=on 1pt off 1pt] (-3.24,5.4) -- (-3.08,5.53);
\draw (-1.36,2.79)-- (-5.12,2.78);
\draw (-3.37,2.78) -- (-3.24,2.95);
\draw (-3.37,2.78) -- (-3.24,2.62);
\draw (-3.11,2.78) -- (-2.98,2.95);
\draw (-3.11,2.78) -- (-2.98,2.62);
\draw (-3.63,2.78) -- (-3.5,2.95);
\draw (-3.63,2.78) -- (-3.5,2.62);
\draw (3.75,6.95)-- (2.75,6.96);
\draw (3.12,6.95) -- (3.25,7.12);
\draw (3.12,6.95) -- (3.25,6.79);
\draw (1.32,2.78)-- (0.82,3.64);
\draw (1.01,3.32) -- (1.22,3.29);
\draw (1.01,3.32) -- (0.93,3.13);
\draw (5.15,2.76)-- (5.65,3.63);
\draw (5.47,3.31) -- (5.55,3.11);
\draw (5.47,3.31) -- (5.26,3.28);
\draw (7.83,2.79)-- (7.33,3.66);
\draw (7.51,3.34) -- (7.72,3.31);
\draw (7.51,3.34) -- (7.43,3.14);
\draw (11.66,2.8)-- (12.16,3.67);
\draw (11.97,3.35) -- (12.05,3.15);
\draw (11.97,3.35) -- (11.76,3.32);
\draw (10.23,6.98)-- (9.23,6.98);
\draw (9.6,6.98) -- (9.73,7.15);
\draw (9.6,6.98) -- (9.73,6.81);
\draw (1.82,3.64)-- (1.32,2.78);
\draw (1.51,3.1) -- (1.43,3.29);
\draw (1.51,3.1) -- (1.72,3.13);
\draw (1.82,3.64)-- (0.82,3.64);
\draw (1.2,3.64) -- (1.33,3.81);
\draw (1.2,3.64) -- (1.32,3.48);
\draw (4.65,3.63)-- (5.15,2.76);
\draw (4.97,3.09) -- (4.76,3.12);
\draw (4.97,3.09) -- (5.05,3.28);
\draw (4.65,3.63)-- (5.65,3.63);
\draw (5.28,3.63) -- (5.15,3.46);
\draw (5.28,3.63) -- (5.16,3.8);
\draw (3.74,4.16)-- (3.24,5.03);
\draw (3.43,4.71) -- (3.64,4.68);
\draw (3.43,4.71) -- (3.35,4.51);
\draw (2.74,4.17)-- (3.24,5.03);
\draw (3.06,4.71) -- (3.14,4.52);
\draw (3.06,4.71) -- (2.85,4.68);
\draw (3.74,4.16)-- (2.74,4.17);
\draw (3.11,4.16) -- (3.24,4.33);
\draw (3.11,4.16) -- (3.24,4);
\draw (3.25,6.09)-- (2.75,6.96);
\draw (2.93,6.63) -- (3.14,6.6);
\draw (2.93,6.63) -- (2.85,6.44);
\draw (3.25,6.09)-- (3.75,6.95);
\draw (3.56,6.63) -- (3.64,6.44);
\draw (3.56,6.63) -- (3.35,6.6);
\draw (9.24,4.19)-- (9.74,5.06);
\draw (9.55,4.74) -- (9.63,4.54);
\draw (9.55,4.74) -- (9.34,4.71);
\draw (10.24,4.19)-- (9.74,5.06);
\draw (9.92,4.74) -- (10.13,4.71);
\draw (9.92,4.74) -- (9.84,4.54);
\draw (10.24,4.19)-- (9.24,4.19);
\draw (9.61,4.19) -- (9.74,4.36);
\draw (9.61,4.19) -- (9.74,4.02);
\draw (7.33,3.66)-- (8.33,3.66);
\draw (7.96,3.66) -- (7.83,3.49);
\draw (7.96,3.66) -- (7.83,3.82);
\draw (7.83,2.79)-- (8.33,3.66);
\draw (8.14,3.34) -- (8.22,3.14);
\draw (8.14,3.34) -- (7.93,3.31);
\draw (11.66,2.8)-- (11.16,3.67);
\draw (11.34,3.35) -- (11.55,3.32);
\draw (11.34,3.35) -- (11.26,3.15);
\draw (11.16,3.67)-- (12.16,3.67);
\draw (11.79,3.67) -- (11.66,3.5);
\draw (11.79,3.67) -- (11.66,3.83);
\draw (9.23,6.98)-- (9.73,6.11);
\draw (9.55,6.43) -- (9.34,6.46);
\draw (9.55,6.43) -- (9.63,6.63);
\draw (10.23,6.98)-- (9.73,6.11);
\draw (9.92,6.43) -- (9.84,6.63);
\draw (9.92,6.43) -- (10.13,6.46);
\draw (0.82,3.64)-- (2.75,6.96);
\draw (1.85,5.41) -- (1.93,5.22);
\draw (1.85,5.41) -- (1.64,5.38);
\draw (5.65,3.63)-- (3.75,6.95);
\draw (4.64,5.4) -- (4.85,5.37);
\draw (4.64,5.4) -- (4.56,5.21);
\draw (2.74,4.17)-- (1.82,3.64);
\draw (2.17,3.84) -- (2.2,4.05);
\draw (2.17,3.84) -- (2.37,3.76);
\draw (3.74,4.16)-- (4.65,3.63);
\draw (4.31,3.83) -- (4.12,3.75);
\draw (4.31,3.83) -- (4.28,4.04);
\draw (3.24,5.03)-- (3.25,6.09);
\draw (3.25,5.82) -- (3.41,5.69);
\draw (3.25,5.82) -- (3.08,5.69);
\draw (3.25,5.56) -- (3.41,5.43);
\draw (3.25,5.56) -- (3.08,5.43);
\draw (5.15,2.76)-- (1.32,2.78);
\draw (3.11,2.77) -- (3.24,2.94);
\draw (3.11,2.77) -- (3.24,2.6);
\draw (3.37,2.77) -- (3.5,2.94);
\draw (3.37,2.77) -- (3.5,2.6);
\draw (2.85,2.77) -- (2.98,2.94);
\draw (2.85,2.77) -- (2.98,2.6);
\draw (11.66,2.8)-- (7.83,2.79);
\draw (9.61,2.8) -- (9.74,2.96);
\draw (9.61,2.8) -- (9.74,2.63);
\draw (9.87,2.8) -- (10,2.96);
\draw (9.87,2.8) -- (10,2.63);
\draw (9.35,2.79) -- (9.48,2.96);
\draw (9.35,2.79) -- (9.48,2.63);
\draw (7.33,3.66)-- (9.23,6.98);
\draw (8.41,5.54) -- (8.49,5.35);
\draw (8.41,5.54) -- (8.2,5.51);
\draw (8.28,5.32) -- (8.36,5.12);
\draw (8.28,5.32) -- (8.07,5.29);
\draw (12.16,3.67)-- (10.23,6.98);
\draw (11.06,5.55) -- (11.27,5.52);
\draw (11.06,5.55) -- (10.98,5.35);
\draw (11.19,5.32) -- (11.4,5.3);
\draw (11.19,5.32) -- (11.11,5.13);
\draw (9.74,5.06)-- (9.73,6.11);
\draw (9.73,5.72) -- (9.9,5.59);
\draw (9.73,5.72) -- (9.57,5.59);
\draw (8.33,3.66)-- (9.24,4.19);
\draw (8.89,3.99) -- (8.87,3.78);
\draw (8.89,3.99) -- (8.7,4.07);
\draw (11.16,3.67)-- (10.24,4.19);
\draw (10.59,3.99) -- (10.78,4.07);
\draw (10.59,3.99) -- (10.62,3.78);
\draw (-0.86,3.65)-- (-1.86,3.65);
\draw (-1.49,3.65) -- (-1.36,3.82);
\draw (-1.49,3.65) -- (-1.36,3.49);
\draw (-4.62,3.65)-- (-3.75,4.15);
\draw (-3.96,4.03) -- (-3.99,3.82);
\draw (-3.96,4.03) -- (-4.15,4.11);
\draw (-4.18,3.9) -- (-4.21,3.69);
\draw (-4.18,3.9) -- (-4.38,3.98);
\draw (-1.86,3.65)-- (-2.74,4.16);
\draw (-2.53,4.03) -- (-2.33,4.11);
\draw (-2.53,4.03) -- (-2.5,3.82);
\draw (-2.3,3.9) -- (-2.11,3.98);
\draw (-2.3,3.9) -- (-2.27,3.69);
\begin{scriptsize}
\draw[color=black] (-4.44,4.76) node {A};
\draw[color=black] (-2.09,4.71) node {A};
\draw[color=black] (-3.28,3.42) node {E};
\draw[color=black] (-1.87,6.14) node {B};
\draw[color=black] (2.09,4.73) node {C};
\draw[color=black] (4.56,4.63) node {D};
\draw[color=black] (3.15,3.45) node {B};
\draw[color=black] (8.49,4.73) node {C};
\draw[color=black] (10.92,4.73) node {D};
\draw[color=black] (4.68,6.04) node {F};
\draw[color=black] (9.65,3.42) node {F};
\draw[color=black] (11.21,6.07) node {E};
\draw[color=black] (3.3,7.18) node {$e_2$};
\draw[color=black] (2.73,6.39) node {$e_1$};
\draw[color=black] (9.9,3.87) node {$e_2$};
\draw[color=black] (9.23,6.41) node {$e_1$};
\draw[color=black] (3.57,5.33) node {$z$};
\draw[color=black] (10,5.38) node {$w$};
\end{scriptsize}
\end{tikzpicture}
	\end{center}
\caption{Knot in $\mathbb{S}^2\times\mathbb{S}^1$. All edges are oriented, but for simplicity, only the necessary names have been given. \label{essedeuxcroixessun}}
\end{figure}
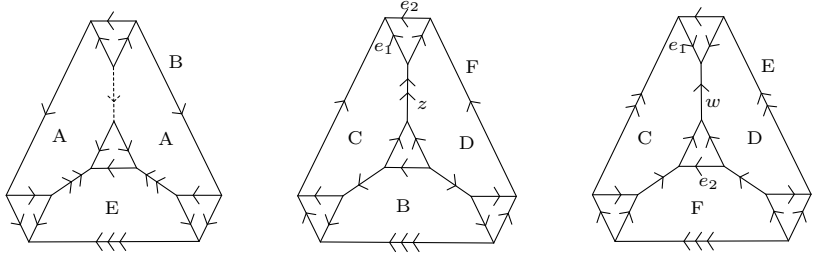

\subsection{Knot in $L(3,1)$}

\begin{figure}
\begin{center}
\begin{tikzpicture}[line cap=round,line join=round,>=triangle 45,x=0.566616726077744cm,y=0.5667578510406542cm]
\clip(-6.36,-3.13) rectangle (13.05,8.16);
\draw (-2.75,6.9)-- (-3.75,6.9);
\draw (-3.38,6.9) -- (-3.25,7.08);
\draw (-3.38,6.9) -- (-3.25,6.73);
\draw (-3.25,6.04)-- (-3.75,6.9);
\draw (-3.56,6.59) -- (-3.34,6.56);
\draw (-3.56,6.59) -- (-3.65,6.38);
\draw (-3.25,6.04)-- (-2.75,6.9);
\draw (-2.93,6.59) -- (-2.84,6.38);
\draw (-2.93,6.59) -- (-3.15,6.56);
\draw (-5.62,3.65)-- (-5.12,2.78);
\draw (-5.3,3.1) -- (-5.52,3.13);
\draw (-5.3,3.1) -- (-5.22,3.3);
\draw (-5.62,3.65)-- (-4.62,3.65);
\draw (-4.99,3.65) -- (-5.12,3.47);
\draw (-4.99,3.65) -- (-5.12,3.82);
\draw (-4.62,3.65)-- (-5.12,2.78);
\draw (-4.94,3.1) -- (-5.02,3.3);
\draw (-4.94,3.1) -- (-4.72,3.13);
\draw (-0.86,3.65)-- (-1.36,2.79);
\draw (-1.18,3.1) -- (-1.26,3.31);
\draw (-1.18,3.1) -- (-0.96,3.13);
\draw (-1.86,3.65)-- (-1.36,2.79);
\draw (-1.55,3.1) -- (-1.76,3.13);
\draw (-1.55,3.1) -- (-1.46,3.31);
\draw (-3.24,5.03)-- (-3.75,4.15);
\draw (-3.56,4.47) -- (-3.65,4.68);
\draw (-3.56,4.47) -- (-3.34,4.5);
\draw (-2.74,4.16)-- (-3.75,4.15);
\draw (-3.38,4.16) -- (-3.24,4.33);
\draw (-3.38,4.16) -- (-3.24,3.98);
\draw (-3.24,5.03)-- (-2.74,4.16);
\draw (-2.92,4.47) -- (-3.14,4.5);
\draw (-2.92,4.47) -- (-2.84,4.68);
\draw [dash pattern=on 2pt off 2pt] (-3.25,6.04)-- (-3.24,5.03);
\draw [dash pattern=on 2pt off 2pt] (-3.24,5.4) -- (-3.42,5.53);
\draw [dash pattern=on 2pt off 2pt] (-3.24,5.4) -- (-3.07,5.53);
\draw (-1.36,2.79)-- (-5.12,2.78);
\draw (-3.38,2.78) -- (-3.24,2.96);
\draw (-3.38,2.78) -- (-3.24,2.61);
\draw (3.75,6.95)-- (2.75,6.96);
\draw (3.12,6.95) -- (3.25,7.13);
\draw (3.12,6.95) -- (3.25,6.78);
\draw (1.32,2.78)-- (0.82,3.64);
\draw (1.01,3.33) -- (1.22,3.3);
\draw (1.01,3.33) -- (0.92,3.12);
\draw (5.15,2.76)-- (5.65,3.63);
\draw (5.47,3.31) -- (5.55,3.11);
\draw (5.47,3.31) -- (5.25,3.28);
\draw (7.83,2.79)-- (7.33,3.66);
\draw (7.51,3.34) -- (7.73,3.31);
\draw (7.51,3.34) -- (7.43,3.14);
\draw (11.66,2.8)-- (12.16,3.67);
\draw (11.97,3.35) -- (12.06,3.15);
\draw (11.97,3.35) -- (11.76,3.32);
\draw (10.23,6.98)-- (9.23,6.98);
\draw (9.6,6.98) -- (9.73,7.15);
\draw (9.6,6.98) -- (9.73,6.8);
\draw (1.82,3.64)-- (0.82,3.64);
\draw (1.19,3.64) -- (1.33,3.82);
\draw (1.19,3.64) -- (1.32,3.47);
\draw (4.65,3.63)-- (5.15,2.76);
\draw (4.97,3.08) -- (4.75,3.11);
\draw (4.97,3.08) -- (5.05,3.28);
\draw (4.65,3.63)-- (5.65,3.63);
\draw (5.29,3.63) -- (5.15,3.46);
\draw (5.29,3.63) -- (5.16,3.8);
\draw (3.74,4.16)-- (3.24,5.03);
\draw (3.43,4.72) -- (3.65,4.68);
\draw (3.43,4.72) -- (3.34,4.51);
\draw (2.74,4.17)-- (3.24,5.03);
\draw (3.06,4.72) -- (3.14,4.51);
\draw (3.06,4.72) -- (2.84,4.69);
\draw (3.74,4.16)-- (2.74,4.17);
\draw (3.11,4.16) -- (3.24,4.34);
\draw (3.11,4.16) -- (3.24,3.99);
\draw (3.25,6.09)-- (2.75,6.96);
\draw (2.93,6.64) -- (3.15,6.61);
\draw (2.93,6.64) -- (2.85,6.43);
\draw (9.24,4.19)-- (9.74,5.06);
\draw (9.55,4.74) -- (9.64,4.54);
\draw (9.55,4.74) -- (9.34,4.71);
\draw (10.24,4.19)-- (9.74,5.06);
\draw (9.92,4.74) -- (10.14,4.71);
\draw (9.92,4.74) -- (9.84,4.54);
\draw (10.24,4.19)-- (9.24,4.19);
\draw (9.6,4.19) -- (9.74,4.36);
\draw (9.6,4.19) -- (9.74,4.01);
\draw (7.33,3.66)-- (8.33,3.66);
\draw (7.96,3.66) -- (7.83,3.48);
\draw (7.96,3.66) -- (7.83,3.83);
\draw (7.83,2.79)-- (8.33,3.66);
\draw (8.14,3.34) -- (8.23,3.14);
\draw (8.14,3.34) -- (7.93,3.31);
\draw (11.66,2.8)-- (11.16,3.67);
\draw (11.34,3.35) -- (11.56,3.32);
\draw (11.34,3.35) -- (11.26,3.15);
\draw (11.16,3.67)-- (12.16,3.67);
\draw (11.79,3.67) -- (11.66,3.49);
\draw (11.79,3.67) -- (11.66,3.84);
\draw (9.23,6.98)-- (9.73,6.11);
\draw (9.55,6.43) -- (9.33,6.46);
\draw (9.55,6.43) -- (9.63,6.63);
\draw (10.23,6.98)-- (9.73,6.11);
\draw (9.92,6.43) -- (9.83,6.63);
\draw (9.92,6.43) -- (10.13,6.46);
\draw (0.82,3.64)-- (2.75,6.96);
\draw (1.86,5.42) -- (1.94,5.21);
\draw (1.86,5.42) -- (1.64,5.39);
\draw (5.65,3.63)-- (3.75,6.95);
\draw (4.64,5.41) -- (4.85,5.38);
\draw (4.64,5.41) -- (4.55,5.2);
\draw (2.74,4.17)-- (1.82,3.64);
\draw (2.16,3.84) -- (2.2,4.05);
\draw (2.16,3.84) -- (2.37,3.75);
\draw (5.15,2.76)-- (1.32,2.78);
\draw (3.1,2.77) -- (3.24,2.94);
\draw (3.1,2.77) -- (3.24,2.6);
\draw (11.66,2.8)-- (7.83,2.79);
\draw (9.61,2.8) -- (9.74,2.97);
\draw (9.61,2.8) -- (9.74,2.62);
\draw (8.33,3.66)-- (9.24,4.19);
\draw (8.9,3.99) -- (8.87,3.77);
\draw (8.9,3.99) -- (8.69,4.07);
\draw (11.16,3.67)-- (10.24,4.19);
\draw (10.58,4) -- (10.79,4.08);
\draw (10.58,4) -- (10.61,3.78);
\draw (-4.62,3.65)-- (-3.75,4.15);
\draw (-4.07,3.97) -- (-4.1,3.75);
\draw (-4.07,3.97) -- (-4.27,4.05);
\draw (0.51,2.01)-- (-0.49,2.01);
\draw (-0.13,2.01) -- (0.01,2.19);
\draw (-0.13,2.01) -- (0.01,1.84);
\draw (0.01,1.15)-- (-0.49,2.01);
\draw (-0.31,1.7) -- (-0.09,1.67);
\draw (-0.31,1.7) -- (-0.39,1.49);
\draw (0.01,1.15)-- (0.51,2.01);
\draw (0.33,1.7) -- (0.41,1.49);
\draw (0.33,1.7) -- (0.11,1.67);
\draw (-2.37,-1.24)-- (-1.86,-2.11);
\draw (-2.05,-1.79) -- (-2.27,-1.76);
\draw (-2.05,-1.79) -- (-1.96,-1.59);
\draw (-2.37,-1.24)-- (-1.37,-1.24);
\draw (-1.73,-1.24) -- (-1.87,-1.42);
\draw (-1.73,-1.24) -- (-1.87,-1.07);
\draw (-1.37,-1.24)-- (-1.86,-2.11);
\draw (-1.68,-1.79) -- (-1.77,-1.59);
\draw (-1.68,-1.79) -- (-1.46,-1.76);
\draw (2.39,-1.24)-- (1.89,-2.11);
\draw (2.07,-1.79) -- (1.99,-1.59);
\draw (2.07,-1.79) -- (2.29,-1.76);
\draw (1.39,-1.24)-- (1.89,-2.11);
\draw (1.71,-1.79) -- (1.49,-1.76);
\draw (1.71,-1.79) -- (1.79,-1.59);
\draw (0.01,0.13)-- (-0.49,-0.74);
\draw (-0.31,-0.42) -- (-0.39,-0.21);
\draw (-0.31,-0.42) -- (-0.09,-0.39);
\draw (0.51,-0.74)-- (-0.49,-0.74);
\draw (-0.12,-0.74) -- (0.01,-0.56);
\draw (-0.12,-0.74) -- (0.01,-0.91);
\draw (0.01,0.13)-- (0.51,-0.74);
\draw (0.33,-0.42) -- (0.11,-0.39);
\draw (0.33,-0.42) -- (0.41,-0.21);
\draw (0.51,2.01)-- (2.39,-1.24);
\draw (1.52,0.27) -- (1.3,0.3);
\draw (1.52,0.27) -- (1.6,0.47);
\draw (2.39,-1.24)-- (1.39,-1.24);
\draw (1.76,-1.24) -- (1.89,-1.07);
\draw (1.76,-1.24) -- (1.89,-1.41);
\draw (7.33,1.92)-- (6.33,1.92);
\draw (6.69,1.92) -- (6.83,2.1);
\draw (6.69,1.92) -- (6.83,1.75);
\draw (6.83,1.06)-- (7.33,1.92);
\draw (7.15,1.61) -- (7.23,1.4);
\draw (7.15,1.61) -- (6.93,1.58);
\draw (4.45,-1.33)-- (4.96,-2.2);
\draw (4.77,-1.88) -- (4.55,-1.85);
\draw (4.77,-1.88) -- (4.86,-1.68);
\draw (4.45,-1.33)-- (5.45,-1.33);
\draw (5.09,-1.33) -- (4.95,-1.51);
\draw (5.09,-1.33) -- (4.95,-1.16);
\draw (5.45,-1.33)-- (4.96,-2.2);
\draw (5.14,-1.88) -- (5.05,-1.68);
\draw (5.14,-1.88) -- (5.36,-1.85);
\draw (9.21,-1.33)-- (8.71,-2.2);
\draw (8.89,-1.88) -- (8.81,-1.68);
\draw (8.89,-1.88) -- (9.11,-1.85);
\draw (6.83,0.04)-- (6.33,-0.83);
\draw (6.51,-0.51) -- (6.43,-0.3);
\draw (6.51,-0.51) -- (6.73,-0.48);
\draw (7.33,-0.83)-- (6.33,-0.83);
\draw (6.7,-0.83) -- (6.83,-0.65);
\draw (6.7,-0.83) -- (6.83,-1);
\draw (6.83,0.04)-- (7.33,-0.83);
\draw (7.15,-0.51) -- (6.93,-0.48);
\draw (7.15,-0.51) -- (7.23,-0.3);
\draw (6.33,1.92)-- (4.45,-1.33);
\draw (5.32,0.18) -- (5.24,0.38);
\draw (5.32,0.18) -- (5.54,0.21);
\draw (7.33,1.92)-- (9.21,-1.33);
\draw (8.34,0.18) -- (8.12,0.21);
\draw (8.34,0.18) -- (8.42,0.38);
\draw (6.83,1.06)-- (6.83,0.04);
\draw (6.83,0.41) -- (6.66,0.55);
\draw (6.83,0.41) -- (7,0.55);
\draw (9.21,-1.33)-- (8.21,-1.33);
\draw (8.58,-1.33) -- (8.71,-1.16);
\draw (8.58,-1.33) -- (8.71,-1.5);
\draw (5.45,-1.33)-- (6.33,-0.83);
\draw (6.01,-1.01) -- (5.98,-1.23);
\draw (6.01,-1.01) -- (5.81,-0.93);
\draw (-1.36,2.79)-- (-5.12,2.78);
\draw (-5.62,3.65)-- (-3.75,6.9);
\draw (-4.62,5.39) -- (-4.53,5.19);
\draw (-4.62,5.39) -- (-4.83,5.36);
\draw (-0.86,3.65)-- (-2.75,6.9);
\draw (-1.87,5.4) -- (-1.65,5.37);
\draw (-1.87,5.4) -- (-1.96,5.19);
\draw (4.65,3.63)-- (3.74,4.16);
\draw (4.08,3.97) -- (4.29,4.05);
\draw (4.08,3.97) -- (4.11,3.75);
\draw (-0.86,3.65)-- (-1.86,3.64);
\draw (-1.5,3.64) -- (-1.36,3.82);
\draw (-1.5,3.64) -- (-1.36,3.47);
\draw (-1.86,3.64)-- (-2.74,4.16);
\draw (-2.42,3.97) -- (-2.21,4.05);
\draw (-2.42,3.97) -- (-2.39,3.75);
\draw (9.23,6.98)-- (7.33,3.66);
\draw (8.21,5.2) -- (8.13,5.4);
\draw (8.21,5.2) -- (8.43,5.23);
\draw (12.16,3.67)-- (10.23,6.98);
\draw (11.13,5.44) -- (11.34,5.41);
\draw (11.13,5.44) -- (11.04,5.24);
\draw (-1.86,-2.11)-- (1.89,-2.11);
\draw (0.15,-2.11) -- (0.01,-2.28);
\draw (0.15,-2.11) -- (0.01,-1.93);
\draw (-2.37,-1.24)-- (-0.49,2.01);
\draw (-1.36,0.5) -- (-1.28,0.3);
\draw (-1.36,0.5) -- (-1.58,0.47);
\draw (0.01,0.13)-- (0.01,1.15);
\draw (0.01,0.78) -- (0.18,0.64);
\draw (0.01,0.78) -- (-0.16,0.64);
\draw (-0.49,-0.74)-- (-1.37,-1.24);
\draw (-1.04,-1.06) -- (-1.01,-0.84);
\draw (-1.04,-1.06) -- (-0.84,-1.14);
\draw (0.51,-0.74)-- (1.39,-1.24);
\draw (1.07,-1.06) -- (0.87,-1.14);
\draw (1.07,-1.06) -- (1.04,-0.84);
\draw (4.96,-2.2)-- (8.71,-2.2);
\draw (6.97,-2.2) -- (6.83,-2.37);
\draw (6.97,-2.2) -- (6.83,-2.02);
\draw (7.33,-0.83)-- (8.21,-1.33);
\draw (7.89,-1.15) -- (7.69,-1.23);
\draw (7.89,-1.15) -- (7.86,-0.93);
\draw (9.74,5.06)-- (9.73,6.11);
\draw (9.73,5.72) -- (9.91,5.59);
\draw (9.73,5.72) -- (9.56,5.59);
\draw (3.1,5.6) node[anchor=north west] {$z$};
\draw (9.55,5.6) node[anchor=north west] {$w$};
\draw (-0.09,0.84) node[anchor=north west] {$t$};
\draw (6.69,0.66) node[anchor=north west] {$u$};
\draw (1.32,2.78)-- (1.82,3.64);
\draw (1.64,3.33) -- (1.72,3.12);
\draw (1.64,3.33) -- (1.42,3.3);
\draw (3.75,6.95)-- (3.25,6.09);
\draw (3.43,6.4) -- (3.35,6.61);
\draw (3.43,6.4) -- (3.65,6.43);
\draw (3.25,6.09)-- (3.24,5.03);
\draw (3.25,5.43) -- (3.07,5.56);
\draw (3.25,5.43) -- (3.42,5.56);
\draw (6.33,1.92)-- (6.83,1.06);
\draw (6.65,1.37) -- (6.43,1.4);
\draw (6.65,1.37) -- (6.73,1.58);
\draw (8.71,-2.2)-- (8.21,-1.33);
\draw (8.39,-1.65) -- (8.61,-1.68);
\draw (8.39,-1.65) -- (8.31,-1.85);
\begin{scriptsize}
\draw[color=black] (-4.09,5.01) node {A};
\draw[color=black] (-2.41,5.01) node {A};
\draw[color=black] (-3.29,3.42) node {B};
\draw[color=black] (-1.82,6.2) node {C};
\draw[color=black] (2.25,5) node {F};
\draw[color=black] (4.19,5) node {E};
\draw[color=black] (3.15,3.44) node {H};
\draw[color=black] (8.88,5) node {F};
\draw[color=black] (10.55,5) node {E};
\draw[color=black] (4.78,6.12) node {D};
\draw[color=black] (9.68,3.42) node {C};
\draw[color=black] (11.28,6.15) node {H};
\draw[color=black] (-3.27,2.49) node {c};
\draw[color=black] (7.36,3.08) node {$e_1$};
\draw[color=black] (2.59,4.55) node {$e_2$};
\draw[color=black] (8.03,3.83) node {$e_2$};
\draw[color=black] (1.48,5.61) node {a};
\draw[color=black] (4.96,5.56) node {a};
\draw[color=black] (2.12,4.27) node {a};
\draw[color=black] (3.23,2.44) node {c};
\draw[color=black] (9.7,2.54) node {c};
\draw[color=black] (8.62,4.27) node {a};
\draw[color=black] (10.84,4.24) node {a};
\draw[color=black] (-4.35,4.24) node {b};
\draw[color=black] (1.32,-1.92) node {$e_3$};
\draw[color=black] (1.79,0.63) node {b};
\draw[color=black] (5.09,0.55) node {c};
\draw[color=black] (8.52,0.63) node {a};
\draw[color=black] (6.58,0.63) node {e};
\draw[color=black] (5.66,-0.71) node {c};
\draw[color=black] (-5.07,5.53) node {a};
\draw[color=black] (-1.51,5.53) node {a};
\draw[color=black] (4.39,4.24) node {d};
\draw[color=black] (-2.03,4.19) node {b};
\draw[color=black] (7.95,5.61) node {a};
\draw[color=black] (11.48,5.66) node {d};
\draw[color=black] (0.11,-2.44) node {b};
\draw[color=black] (-1.69,0.68) node {c};
\draw[color=black] (-0.25,0.79) node {e};
\draw[color=black] (-1.13,-0.66) node {c};
\draw[color=black] (1.17,-0.66) node {a};
\draw[color=black] (6.84,-2.51) node {a};
\draw[color=black] (8.05,-0.81) node {b};
\draw[color=black] (-0.02,-1.41) node {G};
\draw[color=black] (-0.97,0.09) node {I};
\draw[color=black] (0.91,0.04) node {J};
\draw[color=black] (1.4,1.61) node {B};
\draw[color=black] (5.73,-0.17) node {I};
\draw[color=black] (7.9,-0.14) node {J};
\draw[color=black] (8.31,1.64) node {D};
\draw[color=black] (6.79,-1.46) node {G};
\draw[color=black] (9.52,5.84) node {a};
\draw[color=black] (2.07,3.03) node {$e_1$};
\draw[color=black] (3.08,5.7) node {a};
\draw[color=black] (8.1,-2) node {$e_3$};
\end{scriptsize}
\end{tikzpicture}
\caption{Knot in $L(3,1)$. All edges are oriented, but for simplicity, only the necessary names have been given. \label{knot in ltroisun}}
\end{center}
\end{figure}
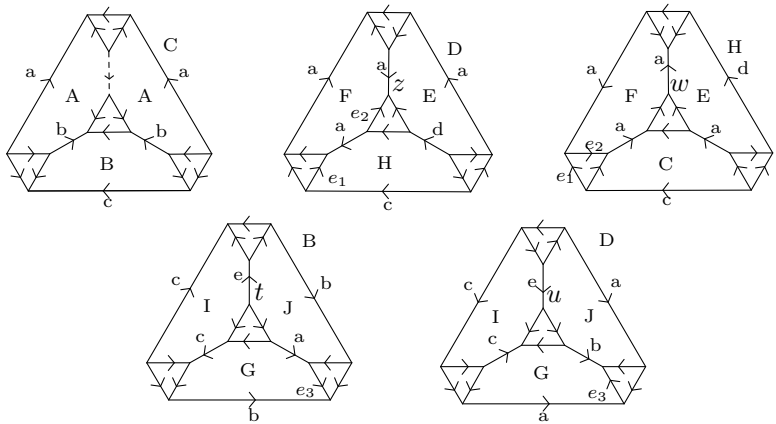

The figure \ref{knot in ltroisun} represents a truncated particular H-triangulation of a knot in $L(3,1)$. The knot is represented by the dotted edge. The shape parameters $z,t,u,w$ are actually associated to edges of the corresponding ideal triangulation. Let us remind that $H_1(L(3,1),\mathbb{Z})=\mathbb{Z}/3\mathbb{Z}$ and $H_2(L(3,1),\mathbb{Z})=0$.

The gluing equations are $t=u^{-1}$ and $z=w^{-1}$ and the presentation of the ring \Rh\ reduces to $\mathbb{Z}\langle u_1,u_{\bar{1}},u_2,u_3, u_{\bar{3}} \mid u_1u_{\bar{1}}=1,  u_3u_{\bar{3}}=1, u_2^3=-1 \rangle$.
$f$ is not surjective, because $\text{Im}(f)=\mathbb{Z}\langle u_1u_2^{2}, u_{\bar{1}}u_2,u_3, u_{\bar{3}}  \mid u_1u_{\bar{1}}=1,  u_3u_{\bar{3}}=1, u_2^3=-1 \rangle$.

\section{References}

\bibliographystyle{plain}
\bibliography{A_non_commutative_generalisation}

\begin{thebibliography}{10}

\bibitem{BenPetr}
Riccardo Benedetti and Carlo Petronio.
\newblock {\em Lectures on hyperbolic geometry}.
\newblock Universitext. Springer-Verlag, Berlin, 1992.

\bibitem{Kashaev-definition_delta_groupoid}
R.~M. Kashaev.
\newblock $\delta$-groupoids in knot theory.
\newblock {\em Geometriae Dedicata}, 150(1):105--130, 2010.

\bibitem{def_anneau}
Rinat Kashaev.
\newblock Noncommutative teichm{\"u}ller spaces and deformation varieties of
  knot complements.

\bibitem{Kashaev2015}
Rinat Kashaev, Feng Luo, and Grigory Vartanov.
\newblock A tqft of turaev--viro type on shaped triangulations.
\newblock {\em Annales Henri Poincar{\'e}}, pages 1--35, 2015.

\bibitem{Kashaev-Delta-groupoids_and_ideal_triangulations}
Rinat~M. Kashaev.
\newblock Delta-groupoids and ideal triangulations.
\newblock In {\em Chern-{S}imons gauge theory: 20 years after}, volume~50 of
  {\em AMS/IP Stud. Adv. Math.}, pages 201--216. Amer. Math. Soc., Providence,
  RI, 2011.

\bibitem{luo2012solving}
Feng Luo.
\newblock Solving thurston equation in a commutative ring.
\newblock {\em arXiv preprint arXiv:1201.2228}, 2012.

\bibitem{solutions_for_gluing_eq}
Feng Luo, Stephan Tillmann, and Tian Yang.
\newblock Thurston's spinning construction and solutions to the hyperbolic
  gluing equations for closed hyperbolic 3-manifolds.
\newblock {\em Proc. Amer. Math. Soc.}, 141(1):335--350, 2013.

\bibitem{livre_Matveev}
Sergei Matveev.
\newblock {\em Algorithmic topology and classification of 3-manifolds},
  volume~9 of {\em Algorithms and Computation in Mathematics}.
\newblock Springer, Berlin, second edition, 2007.

\bibitem{NEUMANN1985307}
Walter~D. Neumann and Don Zagier.
\newblock Volumes of hyperbolic three-manifolds.
\newblock {\em Topology}, 24(3):307 -- 332, 1985.

\bibitem{Petronio}
Carlo Petronio.
\newblock Ideal triangulations of link complements and hyperbolicity equations.
\newblock {\em Geom. Dedicata}, 66(1):27--50, 1997.

\bibitem{Segerman_Generalisation}
Henry Segerman.
\newblock A generalisation of the deformation variety.
\newblock {\em Algebr. Geom. Topol.}, 12(4):2179--2244, 2012.

\bibitem{MR2866925}
Henry Segerman and Stephan Tillmann.
\newblock Pseudo-developing maps for ideal triangulations {I}: essential edges
  and generalised hyperbolic gluing equations.
\newblock In {\em Topology and geometry in dimension three}, volume 560 of {\em
  Contemp. Math.}, pages 85--102. Amer. Math. Soc., Providence, RI, 2011.

\bibitem{Princeton_notes}
William~P. Thurston.
\newblock The geometry and topology of 3-manifolds.
\newblock {\em Princeton Univ. Math. Dept.}, 1978.

\end{thebibliography}

\Address

\end{document}